\newcommand{\norm}[1]{\ensuremath{\left\| #1 \right\|}}
\newcommand{\bracket}[1]{\ensuremath{\left[ #1 \right]}}
\newcommand{\braces}[1]{\ensuremath{\left\{ #1 \right\}}}
\newcommand{\refeqn}[1]{(\ref{eqn:#1})}
\newcommand{\reffig}[1]{Figure \ref{fig:#1}}
\newcommand{\tr}[1]{\mbox{tr}\ensuremath{\negthickspace\bracket{#1}}}
\newcommand{\trs}[1]{\mathrm{tr}\ensuremath{[#1]}}
\newcommand{\SO}{\ensuremath{\mathsf{SO(3)}}}
\newcommand{\T}{\ensuremath{\mathsf{T}}}
\renewcommand{\L}{\ensuremath{\mathsf{L}}}
\newcommand{\so}{\ensuremath{\mathfrak{so}(3)}}
\newcommand{\SE}{\ensuremath{\mathsf{SE(3)}}}
\renewcommand{\Re}{\ensuremath{\mathbb{R}}}
\newcommand{\Sph}{\ensuremath{\mathsf{S}}}
\newcommand{\D}{\ensuremath{\mathbf{D}}}
\title{\LARGE \bf
Geometric Nonlinear PID Control of a Quadrotor UAV on \SE}
\author{Farhad Goodarzi, Daewon Lee, and Taeyoung Lee\authorrefmark{1}
\thanks{Farhad Goodarzi, Daewon Lee, Taeyoung Lee, Mechanical and Aerospace Engineering, The George Washington University, Washington DC 20052 {\tt \{fgoodarzi,daewonlee,tylee\}@gwu.edu}}%
\thanks{\textsuperscript{\footnotesize\ensuremath{*}}This research has been supported in part by NSF under the grant CMMI-1243000 (transferred from 1029551).}
}
\newtheorem{prop}{Proposition}
\begin{document}
\allowdisplaybreaks
\maketitle \thispagestyle{empty} \pagestyle{empty}

\begin{abstract}
Nonlinear PID control systems for a quadrotor UAV are proposed to follow an attitude tracking command and a position tracking command. The control systems are developed directly on the special Euclidean group to avoid singularities of minimal attitude representations or ambiguity of quaternions. A new form of integral control terms is proposed to guarantee almost global asymptotic stability when there exist uncertainties in the quadrotor dynamics. A rigorous mathematical proof is given. Numerical example illustrating a complex maneuver, and a preliminary experimental result are provided.

\end{abstract}

\section{INTRODUCTION}



A quadrotor unmanned aerial vehicle (UAV) has been envisaged for various applications such as surveillance, sensing or educational purposes, due to its ability to hover with simpler mechanical structures compared to helicopters. Several control systems have been developed based on backstepping, sliding mode controller, or adaptive neural network~\cite{BouSiePIICRA05,EfePMCCA07,NicMacPCCECE08}. Aggressive maneuvers are also demonstrated at \cite{MelMicIJRR12}. However, these are based on Euler angles. Therefore they involve complicated expressions for trigonometric functions, and they exhibit singularities which restrict their ability to achieve complex rotational maneuvers significantly.

There are quadrotor control systems developed in terms of quaternions~\cite{TayMcGITCSTI06}. Quaternions do not have singularities but, as the three-sphere double-covers the special orthogonal group, one attitude may be represented by two antipodal points on the three-sphere. This ambiguity should be carefully resolved in quaternion-based attitude control systems, otherwise they may exhibit unwinding, where a rigid body unnecessarily rotates through a large angle even if the initial attitude error is small~\cite{BhaBerSCL00}. To avoid these, an additional mechanism to lift attitude onto the unit-quaternion space is introduced~\cite{MaySanITAC11}.

There are other limitations of quadrotor control systems such as complexities in controller structures or lack of stability proof. For example, tracking control of a quadrotor UAV has been considered in~\cite{CabCunPICDC09,MelKumPICRA11}, but the control system in~\cite{CabCunPICDC09} has a complex structure since it is based on a multiple-loop backstepping approach, and no stability proof is presented in~\cite{MelKumPICRA11}. Robust tracking control systems are studied in~\cite{NalMarPICDC09,HuaHamITAC09}, but the quadrotor dynamics is simplified by considering planar motion only~\cite{NalMarPICDC09}, or by ignoring the rotational dynamics by timescale separation assumption~\cite{HuaHamITAC09}. 


Recently, the dynamics of a quadrotor UAV is globally expressed on the special Euclidean group, $\SE$, and nonlinear control systems are developed to track outputs of several flight modes~\cite{LeeLeoPICDC10}. Several aggressive maneuvers of a quadrotor UAV are demonstrated based on a hybrid control architecture, and a nonlinear robust control system is also considered in~\cite{LeeLeoPACC12}. As they are directly developed on the special Euclidean group, complexities, singularities, and ambiguities associated with minimal attitude representations or quaternions are completely avoided~\cite{ChaSanICSM11}.

This paper is an extension of the prior works of the author in~\cite{LeeLeoPICDC10,LeeLeoPACC12}. It is assumed that there exist uncertainties on the translational dynamics and the rotational dynamics of a quadrotor UAV, and nonlinear PID controllers are proposed to follow an attitude tracking command and a position tracking command. Linear or nonlinear PID controllers have been widely used in various experimental settings for a quadrotor UAV, without careful stability analyses. This paper provides a new form of integral control terms that guarantees asymptotic convergence of tracking errors with uncertainties. The nonlinear robust tracking control system in~\cite{LeeLeoPACC12} provides ultimate boundedness of tracking errors, and the control input may be prone to chattering if the required ultimate bound is smaller. Compared with~\cite{LeeLeoPACC12}, the control system in this paper provides stronger asymptotic stability, and there is no concern for discontinuities. The structure of the control system is also simplified such that the cross term of the angular velocity does not have to be cancelled.


In short, the unique features of the control system proposed in this paper are as follows: (i) it is developed for the full six degrees of freedom dynamic model of a quadrotor UAV on $\SE$, including the coupling between the translational dynamics and the rotational dynamics, (ii) a rigorous Lyapunov analysis is presented to establish stability properties without any timescale separation assumption, and (iii) it is guaranteed to be robust against unstructured uncertainties in both the translational dynamics and the rotational dynamics, (iv) in contrast to hybrid control systems~\cite{GilHofIJRR11}, complicated reachability set analysis is not required to guarantee safe switching between different flight modes, as the region of attraction for each flight mode covers the configuration space almost globally. To the author's best knowledge, a rigorous mathematical analysis of nonlinear PID-like controllers of a quadrotor UAV with almost global asymptotic stability on $\SE$ has been unprecedented.

\section{QUADROTOR DYNAMICS MODEL}\label{sec:QDM}

Consider a quadrotor UAV model illustrated in \reffig{QM}. 
We choose an inertial reference frame $\{\vec e_1,\vec e_2,\vec e_3\}$ and a body-fixed frame $\{\vec b_1,\vec b_2,\vec b_3\}$. The origin of the body-fixed frame is located at the center of mass of this vehicle. The first and the second axes of the body-fixed frame, $\vec b_1,\vec b_2$, lie in the plane defined by the centers of the four rotors.

The configuration of this quadrotor UAV is defined by the location of the center of mass and the attitude with respect to the inertial frame. Therefore, the configuration manifold is the special Euclidean group $\SE$, which is the semidirect product of $\Re^3$ and the special orthogonal group $\SO=\{R\in\Re^{3\times 3}\,|\, R^TR=I,\, \det{R}=1\}$.

The mass and the inertial matrix of a quadrotor UAV are denoted by $m\in\Re$ and $J\in\Re^{3\times 3}$. Its attitude, angular velocity, position, and velocity are defined by $R\in\SO$, $\Omega,x,v\in\Re^3$, respectively, where the rotation matrix $R$ represents the linear transformation of a vector from the body-fixed frame to the inertial frame and the angular velocity $\Omega$ is represented with respect to the body-fixed frame. The distance between the center of mass to the center of each rotor is $d\in\Re$, and the $i$-th rotor generates a thrust $f_i$ and a reaction torque $\tau_i$ along $-\vec b_3$ for $1\leq i \leq 4$. The magnitude of the total thrust and the total moment in the body-fixed frame are denoted by $f, M\in\Re^3$, respectively. 


\begin{figure}
\setlength{\unitlength}{0.65\columnwidth}\footnotesize
\centerline{
\begin{picture}(1,0.8)(0,0)
\put(0,0){\includegraphics[width=0.65\columnwidth]{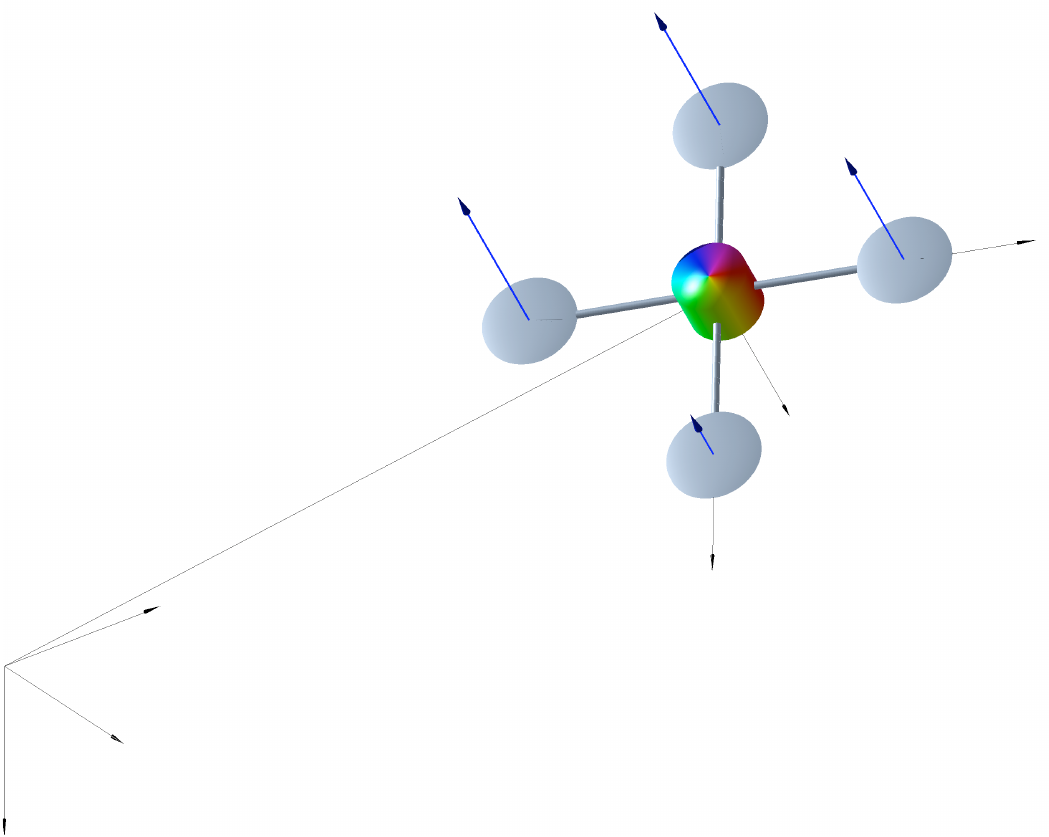}}
\put(0.16,0.18){\shortstack[c]{$\vec e_1$}}
\put(0.13,0.06){\shortstack[c]{$\vec e_2$}}
\put(0.02,0.0){\shortstack[c]{$\vec e_3$}}
\put(0.98,0.5){\shortstack[c]{$\vec b_1$}}
\put(0.70,0.22){\shortstack[c]{$\vec b_2$}}
\put(0.76,0.37){\shortstack[c]{$\vec b_3$}}
\put(0.78,0.66){\shortstack[c]{$f_1$}}
\put(0.56,0.76){\shortstack[c]{$f_2$}}
\put(0.40,0.63){\shortstack[c]{$f_3$}}
\put(0.61,0.42){\shortstack[c]{$f_4$}}
\put(0.30,0.35){\shortstack[c]{$x$}}
\put(0.90,0.35){\shortstack[c]{$R$}}
\end{picture}}
\caption{Quadrotor model}\label{fig:QM}
\end{figure}

The following conventions are assumed for the rotors and propellers, and the thrust and moment that they exert on the quadrotor UAV. We assume that the thrust of each propeller is directly controlled, and the direction of the thrust of each propeller is normal to the quadrotor plane. The first and third propellers are assumed to generate a thrust along the direction of $-\vec b_3$ when rotating clockwise; the second and fourth propellers are assumed to generate a thrust along the same direction of $-\vec b_3$ when rotating counterclockwise. Thus, the thrust magnitude is $f=\sum_{i=1}^4 f_i$, and it is positive when the total thrust vector acts  along $-\vec b_3$, and it is negative when the total thrust vector acts along $\vec b_3$. By the definition of the rotation matrix $R\in\SO$, 
the total thrust vector is given by $-fRe_3\in\Re^3$ in the inertial frame.
We also assume that the torque generated by each propeller is directly proportional to its thrust. Since it is assumed that the first and the third propellers rotate clockwise and the second and the fourth propellers rotate counterclockwise to generate a positive thrust along the direction of $-\vec b_3$, %
the torque generated by the $i$-th propeller about $\vec b_3$ can be written as $\tau_i=(-1)^{i} c_{\tau f} f_i$  for a fixed constant $c_{\tau f}$.    All of these assumptions are fairly common in many quadrotor control systems~\cite{TayMcGITCSTI06,CasLozICSM05}.  

Under these assumptions, the thrust of each propeller $f_1, f_2, f_3, f_4$ is directly converted into $f$ and $M$, or vice versa. In this paper, the thrust magnitude $f\in\Re$ and the moment vector $M\in\Re^3$ are viewed as control inputs. 
The equations of motion are given by
\begin{gather}
\dot x  = v,\label{eqn:EL1}\\
m \dot v = mge_3 - f R e_3 + \Delta_x,\label{eqn:EL2}\\
\dot R = R\hat\Omega,\label{eqn:EL3}\\
J\dot \Omega + \Omega\times J\Omega = M + \Delta_R,\label{eqn:EL4}
\end{gather}
where the \textit{hat map} $\hat\cdot:\Re^3\rightarrow\so$ is defined by the condition that $\hat x y=x\times y$ for all $x,y\in\Re^3$. 
This identifies the Lie algebra $\so$ with $\Re^3$ using the vector cross product in $\Re^3$. 
The inverse of the hat map is denoted by the \textit{vee} map, $\vee:\so\rightarrow\Re^3$. Unstructured, but fixed uncertainties in the translational dynamics and the rotational dynamics of a quadrotor UAV are denoted by $\Delta_x$ and $\Delta_R\in\Re^3$, respectively. 

Throughout this paper, $\lambda_m (A)$ and $\lambda_{M}(A)$ denote the minimum eigenvalue and the maximum eigenvalue of a square matrix $A$, respectively, and $\lambda_m$ and $\lambda_M$ are shorthand for $\lambda_m=\lambda_m(J)$ and $\lambda_M=\lambda_M(J)$. The two-norm of a matrix $A$ is denoted by $\|A\|$.

\section{ATTITUDE CONTROLLED FLIGHT MODE}\label{sec:ACFM}


Since the quadrotor UAV has four inputs, it is possible to achieve asymptotic output tracking for at most four quadrotor UAV outputs.    The quadrotor UAV has three translational and three rotational degrees of freedom; it is not possible to achieve asymptotic output tracking of both attitude and position of the quadrotor UAV. This  motivates us to introduce two flight modes, namely (1) an attitude controlled flight mode, and (2) a position controlled flight mode. While a quadrotor UAV is underactuated, a complex flight maneuver can be defined by specifying a concatenation of flight modes together with conditions for switching between them. This will be further illustrated by a numerical example later. In this section, an attitude controlled flight mode is considered. 



\subsection{Attitude Tracking Errors}

Suppose that an  smooth attitude command $R_d(t)\in\SO$ satisfying the following kinematic equation is given:
\begin{align}
\dot R_d = R_d \hat\Omega_d,
\end{align}
where $\Omega_d(t)$ is the desired angular velocity, which is assumed to be uniformly bounded. We first define errors associated with the attitude dynamics as follows~\cite{BulLew05,Lee11}.

\begin{prop}\label{prop:1}
For a given tracking command $(R_d,\Omega_d)$, and the current attitude and angular velocity $(R,\Omega)$, we define an attitude error function $\Psi:\SO\times\SO\rightarrow\Re$, an attitude error vector $e_R\in\Re^3$, and an angular velocity error vector $e_\Omega\in \Re^3$ as follows:
\begin{gather}
\Psi (R,R_d) = \frac{1}{2}\tr{I-R_d^TR},\\
e_R =\frac{1}{2} (R_d^TR-R^TR_d)^\vee,\\
e_\Omega = \Omega - R^T R_d\Omega_d,
\end{gather}
Then, the following properties hold:
\begin{itemize}
\item[(i)] $\Psi$ is positive-definite about $R=R_d$.
\item[(ii)] The left-trivialized derivative of $\Psi$ is given by
\begin{align}
\T^*_I \L_R\, (\D_R\Psi(R,R_d))= e_R.
\end{align}
\item[(iii)] The critical points of $\Psi$, where $e_R=0$, are $\{R_d\}\cup\{R_d\exp (\pi \hat s),\,s\in\Sph^2 \}$.
\item[(iv)] A lower bound of $\Psi$ is given as follows:
\begin{align}
\frac{1}{2}\|e_R\|^2 \leq \Psi(R,R_d),\label{eqn:PsiLB}
\end{align}
\item[(v)] Let $\psi$ be a positive constant that is strictly less than $2$. If $\Psi(R,R_d)< \psi<2$, then an upper bound of $\Psi$ is given by
\begin{align}
\Psi(R,R_d)\leq \frac{1}{2-\psi} \|e_R\|^2.\label{eqn:PsiUB}
\end{align}
\item[(vi)] The time-derivative of $\Psi$ and $e_R$ satisfies:
\begin{align}
\dot\Psi = e_R\cdot e_\Omega,\quad \|\dot e_R\|\leq\|e_\Omega\|.\label{eqn:Psidot00}
\end{align}
\end{itemize}
\end{prop}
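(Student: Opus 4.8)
The plan is to work throughout with the relative attitude $Q = R_d^T R\in\SO$, in terms of which $\Psi = \frac{1}{2}\tr{I-Q}$ and $\hat e_R = \frac{1}{2}(Q-Q^T)$, and to exploit the angle--axis form $Q = \exp(\theta\hat n)$ with $n\in\Sph^2$ and $\theta\in[0,\pi]$ the rotation angle. Rodrigues' formula gives $\tr{Q} = 1+2\cos\theta$ and $\frac{1}{2}(Q-Q^T) = \sin\theta\,\hat n$, hence $\Psi = 1-\cos\theta$, $e_R = \sin\theta\,n$, and $\|e_R\| = \sin\theta$. With this reduction the scalar claims are immediate: for (i), $\Psi = 1-\cos\theta\ge 0$ with equality exactly at $Q=I$, i.e.\ $R=R_d$; for (iii), $e_R=0$ iff $Q$ is symmetric, and a symmetric rotation matrix has spectrum $\{1,1,1\}$ or $\{1,-1,-1\}$, giving $Q=I$ or $Q=\exp(\pi\hat s)$ with $s\in\Sph^2$; for (iv), $\frac{1}{2}\|e_R\|^2 = \frac{1}{2}(1-\cos\theta)(1+\cos\theta)\le 1-\cos\theta = \Psi$; and for (v), $\Psi<\psi$ forces $1+\cos\theta>2-\psi$, whence $\|e_R\|^2 = \Psi(1+\cos\theta)\ge(2-\psi)\Psi$.

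For (ii) I would differentiate $t\mapsto\Psi(R\exp(t\hat\eta),R_d)$ at $t=0$, obtaining $-\frac{1}{2}\tr{R_d^TR\,\hat\eta}$, and then apply the two trace identities $\tr{A\hat\eta} = \frac{1}{2}\tr{(A-A^T)\hat\eta}$ and $\tr{\hat a\hat b} = -2\,a\cdot b$ to rewrite it as $e_R\cdot\eta$. Since $\frac{d}{dt}\big|_{t=0}\Psi(R\exp(t\hat\eta),R_d)$ equals the pairing of $\T^*_I\L_R(\D_R\Psi)$ with $\eta$ by the definition of the left-trivialized derivative, this establishes the stated formula.

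For (vi) the first step is to differentiate $Q = R_d^T R$ along the dynamics $\dot R = R\hat\Omega$, $\dot R_d = R_d\hat\Omega_d$; using the conjugation identity $Q^T\hat\Omega_d Q = \widehat{Q^T\Omega_d}$ and $Q^T\Omega_d = R^T R_d\Omega_d$, the result collapses to the compact kinematics $\dot Q = Q\,\hat e_\Omega$. Then $\dot\Psi = -\frac{1}{2}\tr{Q\hat e_\Omega} = e_R\cdot e_\Omega$ follows from the same trace identities, and $\dot e_R = \frac{1}{2}(\dot Q - \dot Q^T)^\vee = \frac{1}{2}\bigl(\tr{Q}\,I - Q^T\bigr)e_\Omega$, where I would use the auxiliary identity $\bigl(\tfrac{1}{2}(A\hat w + \hat w A^T)\bigr)^\vee = \tfrac{1}{2}(\tr{A}\,I - A^T)w$, checked on rank-one $A$ and extended by linearity.

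I expect the main obstacle to be the operator-norm bound $\bigl\|\tfrac{1}{2}(\tr{Q}\,I - Q^T)\bigr\|\le 1$ needed to conclude $\|\dot e_R\|\le\|e_\Omega\|$: the matrix $\tr{Q}\,I - Q^T$ is not normal, so its two-norm is not simply its spectral radius, and bounding the eigenvalues is not enough. I plan to handle this by a direct quadratic-form estimate: for a unit vector $y$, $\tfrac{1}{4}\|\tr{Q}\,y - Q^T y\|^2 = \tfrac{1}{4}\bigl((\tr{Q})^2 - 2\tr{Q}\,(y^T Q_s y) + 1\bigr)$, where $Q_s = \frac{1}{2}(Q+Q^T)$ has eigenvalues $1,\cos\theta,\cos\theta$; writing $c=\cos\theta$ and $t = y^T Q_s y\in[c,1]$, the right-hand side is affine in $t$, so a short case split on the sign of $1+2c$ shows it never exceeds $1$. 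Everything else is routine manipulation of the hat and vee maps.
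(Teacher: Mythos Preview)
Your proof is correct and complete. The paper does not actually prove this proposition in-line; its proof reads ``See \cite{Lee11}'' and defers entirely to that reference, so there is no detailed argument here to compare against. Your reduction to the relative attitude $Q=R_d^TR$ and its axis--angle parameters $(\theta,n)$, together with the kinematics $\dot Q=Q\hat e_\Omega$, is the standard route and matches what the cited reference does.

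One minor remark on the final step: your stated concern that $\tfrac{1}{2}(\tr{Q}I-Q^T)$ is not normal, and that therefore a spectral-radius bound would not suffice, is unfounded. Since $Q\in\SO$ satisfies $QQ^T=Q^TQ=I$, a one-line check shows $(\tr{Q}I-Q^T)(\tr{Q}I-Q)=(\tr{Q}I-Q)(\tr{Q}I-Q^T)$, so the matrix \emph{is} normal and its two-norm equals its spectral radius. The eigenvalues of $Q^T$ are $1,e^{\pm i\theta}$, hence those of $\tfrac{1}{2}(\tr{Q}I-Q^T)$ have moduli $|\cos\theta|$ and $\sqrt{(1+\cos\theta)/2}$, both at most $1$. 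This yields $\|\dot e_R\|\leq\|e_\Omega\|$ immediately, without the case split on the sign of $1+2c$. Your direct quadratic-form estimate is nevertheless valid and reaches the same bound, so nothing in your argument needs to change.
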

\begin{proof} See \cite{Lee11}. \end{proof}

\subsection{Attitude Tracking Controller}

We now introduce a nonlinear controller for the attitude controlled flight mode:
\begin{align}
M & = -k_R e_R -k_\Omega e_\Omega -k_I e_I\nonumber\\
&\qquad +(R^TR_d\Omega_d)^\wedge J R^T R_d \Omega_d + J R^T R_d\dot\Omega_d,\label{eqn:aM}\\
e_I & = \int_0^t e_\Omega(\tau)+ c_2e_R(\tau)d\tau,\label{eqn:eI}
\end{align}
where $k_R,k_\Omega,k_I,c_2$ are positive constants. The control moment is composed of proportional, derivative, and integral terms, augmented with additional terms to cancel out the angular acceleration caused by the desired angular velocity. One noticeable difference from the attitude control systems in~\cite{LeeLeoPICDC10,LeeLeoPACC12} is that the cross term at \refeqn{EL4}, namely $\Omega\times J\Omega$ does not have to be cancelled. This simplifies controller structures. 

Unlike common integral control terms where the attitude error is integrated only, here the angular velocity error is also integrated at \refeqn{eI}. This unique term is required to show exponential stability in the presence of the disturbance $\Delta_R$ in the subsequent analysis. From \refeqn{Psidot00}, it essentially increases the proportional term. The corresponding effective controller gains for the proportional term and the integral term are given by $k_R+k_I$ and $c_2k_I$, respectively. We now state the result that the zero equilibrium of tracking errors $(e_R, e_\Omega)$ is exponentially stable.


\begin{prop}{(Attitude Controlled Flight Mode)}\label{prop:Att}
%
Consider the control moment $M$ defined in \refeqn{aM}-\refeqn{eI}. For positive constants $k_R,k_\Omega$, the constants $c_2,B_2$ are chosen such that
\begin{gather}
\|(2J-\trs{J}I)\| \|\Omega_d\| \leq B_2,\label{eqn:B_2}\\
c_2 < \min\bigg\{  \frac{\sqrt{k_R\lambda_m}}{\lambda_M}, \frac{4k_\Omega}{8k_R\lambda_M+(k_\Omega+B_2)^2}\bigg\},\label{eqn:c2}
\end{gather}
Then, the equilibrium of the zero attitude tracking errors $(e_R,e_\Omega,e_I)=(0,0,\frac{\Delta_R}{k_I})$ is almost globally asymptotically stable with respect to $e_R$ and $e_\Omega$\footnote{see \cite[Chapter 4]{HadChe08} for the definition of partial stability}, and the integral term $e_I$ is globally uniformly bounded. It is also locally exponentially stable with respect to $e_R$ and $e_\Omega$.
\end{prop}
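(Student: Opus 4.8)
The plan is to pass to the closed-loop tracking-error dynamics, build a PID-type Lyapunov function around the attitude error function $\Psi$, and exploit that the cross term I insert into that Lyapunov function and the integrand of $e_I$ carry the \emph{same} constant $c_2$. First I would derive the error dynamics. Writing $\Omega_c = R^TR_d\Omega_d$ for the feedforward angular velocity, so that $e_\Omega = \Omega-\Omega_c$ and $\dot\Omega_c = -\hat\Omega\Omega_c + R^TR_d\dot\Omega_d$, I substitute \refeqn{EL4} and the controller \refeqn{aM}; the feedforward terms cancel, and after writing $\Omega = e_\Omega+\Omega_c$ and using the identity $\hat a J + J\hat a = \widehat{(\trs{J}I - J)a}$ to regroup the gyroscopic terms I expect to arrive at
\begin{align}
J\dot e_\Omega = -e_\Omega\times Je_\Omega + \big((2J-\trs{J}I)\Omega_c\big)\times e_\Omega - k_Re_R - k_\Omega e_\Omega - k_I\bar e_I,\nonumber
\end{align}
together with $\dot{\bar e_I} = e_\Omega + c_2 e_R$, where $\bar e_I := e_I - \Delta_R/k_I$; this exhibits $(e_R,e_\Omega,\bar e_I)=0$ as an equilibrium and makes clear that \refeqn{B_2} is the relevant bound on the time-varying coefficient (note $\|(2J-\trs{J}I)\Omega_c\|\le B_2$ since $\|\Omega_c\|=\|\Omega_d\|$).

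Next I would take the Lyapunov candidate
\begin{align}
\mathcal V = \tfrac12\, e_\Omega\cdot Je_\Omega + k_R\Psi(R,R_d) + c_2\, e_R\cdot Je_\Omega + \tfrac{k_I}{2}\|\bar e_I\|^2.\nonumber
\end{align}
Using Proposition \ref{prop:1}(iv)--(v), the eigenvalue bounds $\lambda_m,\lambda_M$ on $J$, and the first inequality in \refeqn{c2}, I would sandwich $\mathcal V$ between positive-definite quadratic forms in $(\|e_R\|,\|e_\Omega\|,\|\bar e_I\|)$ on the region $\Psi<\psi<2$ (and note $\mathcal V$ is globally bounded below and coercive in $(e_\Omega,\bar e_I)$, since $\|e_R\|\le1$ and $\Psi\ge0$). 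Differentiating, $k_R\dot\Psi = k_Re_R\cdot e_\Omega$ (Proposition \ref{prop:1}(vi)) cancels the matching term from $e_\Omega\cdot J\dot e_\Omega$, the skew terms $e_\Omega\cdot(e_\Omega\times Je_\Omega)$ and $e_\Omega\cdot\big(((2J-\trs{J}I)\Omega_c)\times e_\Omega\big)$ vanish, and -- the one genuinely clever point -- because the cross term $c_2\,e_R\cdot Je_\Omega$ and the integrand $e_\Omega+c_2e_R$ share the constant $c_2$, every term containing $\bar e_I$ cancels identically. Bounding what remains with $\|\dot e_R\|\le\|e_\Omega\|$, $\|e_R\|\le1$, and \refeqn{B_2} leaves $\dot{\mathcal V}\le -z^{\top}Wz$ with $z=(\|e_R\|,\|e_\Omega\|)^{\top}$, where positive-definiteness of the $2\times2$ matrix $W$ is precisely the content of the second inequality in \refeqn{c2}.

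From $\dot{\mathcal V}\le 0$ (which holds globally) it follows that $\mathcal V$ is non-increasing, hence bounded, which together with its lower bound gives global uniform boundedness of $e_I$. Boundedness of the state and of $\Omega_d,\dot\Omega_d$ makes $\frac{d}{dt}(z^{\top}Wz)$ bounded, and $\int_0^\infty z^{\top}Wz\,dt \le \mathcal V(0)-\inf\mathcal V<\infty$, so Barbalat's lemma (equivalently a LaSalle--Yoshizawa argument on compact sublevel sets) yields $e_R,e_\Omega\to 0$. By Proposition \ref{prop:1}(iii) this means $R$ tends to $\{R_d\}\cup\{R_d\exp(\pi\hat s):s\in\Sph^2\}$. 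To upgrade this to almost global attraction to $R_d$ I would linearize the error dynamics at an undesired equilibrium $R=R_d\exp(\pi\hat s)$: there $\Psi$ attains its maximum, the attitude part of $\mathcal V$ is negative definite, so $\mathcal V$ strictly decreases along trajectories started near but off such a point and the linearization is unstable (it has an eigenvalue in the open right half-plane); hence each such equilibrium has a stable set of positive codimension, and the union over $s\in\Sph^2$ has Lebesgue measure zero. Combined with Lyapunov stability of the desired equilibrium (from $\mathcal V$ positive definite, $\dot{\mathcal V}\le0$), this gives almost global asymptotic stability with respect to $(e_R,e_\Omega)$, with $e_I$ bounded. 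Finally, for local exponential stability I would replace $\mathcal V$ near the desired equilibrium by $\mathcal V + \varepsilon\,\bar e_I\cdot Je_\Omega$ with $\varepsilon>0$ small; its derivative supplies a $-\varepsilon k_I\|\bar e_I\|^2$ term, so the new $\dot{\mathcal V}$ becomes negative definite in $(e_R,e_\Omega,\bar e_I)$ while the new $\mathcal V$ stays positive definite, and the standard quadratic comparison then gives exponential decay of $(e_R,e_\Omega)$ (indeed of the full error).

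The steps I expect to fight with are the error-dynamics reduction in the first paragraph -- getting the gyroscopic terms into the $(2J-\trs{J}I)$ form requires the right Lie-algebra identity and careful bookkeeping with $\Omega = e_\Omega+\Omega_c$ -- and the almost-global argument, where one must genuinely argue that the undesired critical points are unstable and invoke a measure-zero stable-manifold statement; everything else is careful but routine Lyapunov accounting, with the decisive ingredient, the exact $\bar e_I$-cancellation, becoming transparent once $\mathcal V$ is written down.
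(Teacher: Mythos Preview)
Your proposal is correct and follows the paper's approach almost verbatim: the same error dynamics (the paper writes it as $J\dot e_\Omega=(Je_\Omega+d)^\wedge e_\Omega-k_Re_R-k_\Omega e_\Omega-k_Ie_I+\Delta_R$ with $d=(2J-\trs{J}I)R^TR_d\Omega_d$, identical to yours up to the $\bar e_I$ shift), the same Lyapunov function $\mathcal V$, the same $\bar e_I$-cancellation due to the shared $c_2$, and the same $2\times2$ matrix $W_2$ governed by \refeqn{c2}.

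There are two places where your tactics diverge from the paper's. For instability of the undesired equilibria the paper does not linearize; it applies Chetaev's theorem directly to $\mathcal W_2:=2k_R-\mathcal V$, observing that in any neighborhood of an undesired equilibrium one can make $\mathcal W_2>0$ (since $\Psi<2$ nearby) while $\dot{\mathcal W}_2=-\dot{\mathcal V}\ge0$. This is cleaner than your linearization sketch because the undesired equilibria form a two-dimensional manifold (parametrized by $s\in\Sph^2/\{\pm1\}$), so the Jacobian there is non-hyperbolic and a bare eigenvalue argument needs a center-stable-manifold caveat; Chetaev sidesteps that entirely. For local exponential stability the paper simply invokes the two-sided quadratic bounds on $\mathcal V$ valid on $\{\Psi<\psi_2<2\}$ together with $\dot{\mathcal V}\le -z_2^TW_2z_2$ and appeals to partial stability; your augmentation by $\varepsilon\,\bar e_I\cdot Je_\Omega$ is a legitimate and arguably more self-contained route, since it produces a strict $-\varepsilon k_I\|\bar e_I\|^2$ term and gives exponential decay of the full error without an external partial-stability result.
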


\begin{proof}
See Appendix \ref{sec:pfAtt}.
\end{proof}


While these results are developed for the attitude dynamics of a quadrotor UAV, they can be applied to the attitude dynamics of any rigid body. Nonlinear PID-like controllers have been developed for attitude stabilization in terms of modified Rodriguez parameters~\cite{SubJAS04} and quaternions~\cite{SubAkeJGCD04}, and for attitude tracking in terms of Euler-angles~\cite{ShoJuaPACC02}. The proposed tracking control system is developed on $\SO$, therefore it avoids singularities of Euler-angles and Rodriguez parameters, as well as unwinding of quaternions. 

Asymptotic tracking of the quadrotor attitude does not require specification of the thrust magnitude.   As an auxiliary problem, the thrust magnitude can be chosen in many different ways to achieve an additional translational motion objective. For example, it can be used to asymptotically track a quadrotor altitude command~\cite{LeeLeo}. Since the translational motion of the quadrotor UAV can only be partially controlled; this flight mode is most suitable for short time periods where an attitude maneuver is to be completed. 


\section{POSITION CONTROLLED FLIGHT MODE}\label{sec:PCFM}

We now introduce a nonlinear controller for the position controlled flight mode. 

\subsection{Position Tracking Errors}

Suppose that an arbitrary smooth position tracking command $x_d(t) \in \Re^3$ is given.    The position tracking errors for the position and the velocity are given by:
\begin{align}
e_x  = x - x_d,\quad
e_v  = \dot e_x = v - \dot x_d.
\end{align}
Similar with \refeqn{eI}, an integral control term for the position tracking controller is defined as
\begin{align}
e_i = \int_0^t e_v(\tau) + c_1 e_x (\tau) d\tau,
\end{align}
for a positive constant $c_1$ specified later. 

\newcommand{\sat}{\mathrm{sat}}
For a positive constant $\sigma\in\Re$, a saturation function $\sat_\sigma:\Re\rightarrow [-\sigma,\sigma]$ is introduced as
\begin{align*}
\sat_\sigma(y) = \begin{cases}
\sigma & \mbox{if } y >\sigma\\
y & \mbox{if } -\sigma \leq y \leq\sigma\\
-\sigma & \mbox{if } y <-\sigma\\
\end{cases}.
\end{align*}
If the input is a vector $y\in\Re^n$, then the above saturation function is applied element by element to define a saturation function $\sat_\sigma(y):\Re^n\rightarrow [-\sigma,\sigma]^n$ for a vector.

In the position controlled tracking mode, the attitude dynamics is controlled to follow the computed attitude $R_c(t)\in\SO$ and the computed angular velocity $\Omega_c(t)$ defined as
\begin{align}
R_c=[ b_{1_c};\, b_{3_c}\times b_{1_c};\, b_{3_c}],\quad \hat\Omega_c = R_c^T \dot R_c\label{eqn:RdWc},
\end{align}
where $b_{3_c}\in\Sph^2$ is given by
\begin{align}
 b_{3_c} = -\frac{-k_x e_x - k_v e_v -k_i\sat_\sigma(e_i) - mg e_3 +m\ddot x_d}{\norm{-k_x e_x - k_v e_v -k_i\sat_\sigma(e_i)- mg e_3 + m\ddot x_d}},\label{eqn:Rd3}
\end{align}
for positive constants $k_x,k_v,k_i,\sigma$. The unit vector $b_{1_c}\in\Sph^2$ is selected to be orthogonal to $b_{3_c}$, thereby guaranteeing that $R_c\in\SO$. It can be chosen to specify the desired heading direction, and the detailed procedure to select $b_{1c}$ is described later at Section \ref{sec:b1c}.


Following the prior definition of the attitude error and the angular velocity error, we define
\begin{align}
e_R = \frac{1}{2} (R_c^TR - R^T R_c)^\vee, \quad 
e_\Omega = \Omega - R^T R_c \Omega_c\label{eqn:eWc},
\end{align}
and we also define the integral term of the attitude dynamics $e_I$ as \refeqn{eI}.
It is assumed that
\begin{align}
\norm{-k_x e_x - k_v e_v -k_i\sat_\sigma(e_i) - mg e_3 + m\ddot x_d} \neq 0,\label{eqn:A1}
\end{align}
and the commanded acceleration is uniformly bounded:
\begin{align}
\|-mge_3+m\ddot x_d\| < B_1\label{eqn:B}
\end{align}
for a given positive constant $B_1$. It is also assumed that an upper bound of the infinite norm of the uncertainty is known:
\begin{align}
\|\Delta_x\|_{\infty} \leq \delta_x
\end{align}
for a given constant $\delta_x$. 

\subsection{Position Tracking Controller}

The nonlinear controller for the position controlled flight mode, described by control expressions for the  thrust magnitude and the  moment vector, are:
\begin{align}
f & = ( k_x e_x + k_v e_v +k_i\sat_\sigma(e_i)+ mg e_3-m\ddot x_d)\cdot Re_3,\label{eqn:f}\\
M & = -k_R e_R -k_\Omega e_\Omega -k_I e_I\nonumber\\
&\qquad +(R^TR_c\Omega_c)^\wedge J R^T R_c \Omega_c + J R^T R_c\dot\Omega_c.\label{eqn:M}
\end{align}


The nonlinear controller given by equations \refeqn{f}, \refeqn{M} can be given a backstepping interpretation.   The computed attitude $R_c$ given in equation \refeqn{RdWc} is selected so that the thrust axis $-b_3$ of the quadrotor UAV tracks the computed direction given by $-b_{3_c}$ in \refeqn{Rd3}, which is a direction of the thrust vector that achieves position tracking.   The moment expression \refeqn{M} causes the attitude of the quadrotor UAV to asymptotically track $R_c$ and the thrust magnitude expression \refeqn{f} achieves asymptotic position tracking. The saturation on the integral term is required to restrict the effects of the attitude tracking errors on the translational dynamics for the stability of the complete coupled system.

The corresponding closed loop control system is described by equations \refeqn{EL1}-\refeqn{EL4}, using the controller expressions \refeqn{f}-\refeqn{M}. We now state the result that the zero equilibrium of tracking errors $(e_x, e_v, e_R, e_\Omega)$ is exponentially stable.


\begin{prop}{(Position Controlled Flight Mode)}\label{prop:Pos}
Suppose that the initial conditions satisfy
\begin{gather}
\Psi(R(0),R_c(0)) < \psi_1 < 1,\label{eqn:Psi0}\\
\|e_x(0)\| < e_{x_{\max}},\label{eqn:ex0}
\end{gather}
for positive constants $\psi_1, e_{x_{\max}}$. Consider the control inputs $f,M$ defined in \refeqn{f}-\refeqn{M}.
For positive constants $k_x,k_v$, we choose positive constants $c_1,c_2,k_R,k_\Omega,k_I,k_i,\sigma$ such that
\begin{gather}
k_i\sigma > \delta_x,\label{eqn:kisigma}\\
c_1 < \min\braces{\frac{4k_xk_v(1-\alpha)^2}{k_v^2(1+\alpha)^2+4m k_x(1-\alpha)},\; \sqrt{\frac{k_x}{m}} },\label{eqn:c1b}\\
\lambda_{m}(W_2) > \frac{\|W_{12}\|^2}{4\lambda_{m}(W_1)},\label{eqn:kRkWb}
\end{gather}
and \refeqn{c2} is satisfied, where $\alpha=\sqrt{\psi_1(2-\psi_1)}$, and the matrices $W_1,W_{12},W_2\in\Re^{2\times 2}$ are given by
\begin{align}
W_1 &= \begin{bmatrix} {c_1k_x}(1-\alpha) & -\frac{c_1k_v}{2}(1+\alpha)\\
-\frac{c_1k_v}{2}(1+\alpha) & k_v(1-\alpha)-mc_1\end{bmatrix},\label{eqn:W1}\\
W_{12}&=\begin{bmatrix}
{c_1}(\sqrt{3}k_i\sigma+B_1) & 0 \\ k_i\sigma+B_1+k_xe_{x_{\max}} & 0\end{bmatrix},\label{eqn:W12}\\
W_2 & = \begin{bmatrix} c_2k_R & -\frac{c_2}{2}(k_\Omega+B_2) \\ 
-\frac{c_2}{2}(k_\Omega+B_2) & k_\Omega-2c_2\lambda_M \end{bmatrix}.\label{eqn:W2}
\end{align}
Then, the zero equilibrium of the tracking errors is exponentially stable with respect to $e_x,e_v,e_R,e_\Omega$, and the integral terms $e_i,e_I$ are uniformly bounded.
\end{prop}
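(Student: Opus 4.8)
## Proof Proposal for Proposition~\ref{prop:Pos}

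The plan is to build a single Lyapunov function for the full coupled error dynamics $(e_x,e_v,e_i,e_R,e_\Omega,e_I)$ that splits additively into a \emph{translational} part and a \emph{rotational} part, plus the usual PID cross terms $c_1 m\,e_v\cdot e_x$ and $c_2\,e_\Omega\cdot J e_R$ that make each block quadratic-form-bounded. First I would write down the translational error dynamics: from \refeqn{EL2}, $m\dot e_v = mge_3 - fRe_3 - m\ddot x_d + \Delta_x$, and then use the standard trick of adding and subtracting the ideal force $b_{3_c}\norm{\cdots}$ so that $m\dot e_v = -k_x e_x - k_v e_v - k_i\sat_\sigma(e_i) + \Delta_x + X$, where $X$ is the attitude-mismatch term proportional to $e_3^T(R^T R_c - I)$-type quantities and hence bounded by $\|X\|\le \alpha(k_x\|e_x\| + k_v\|e_v\| + k_i\sigma + B_1) + $ (a piece controlled by $\|e_R\|$). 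The key structural point, which the paper has already flagged, is that the $\sat_\sigma$ on $e_i$ keeps this coupling term uniformly bounded regardless of how large the integral state grows. For the rotational side I would reuse Proposition~\ref{prop:Att} essentially verbatim (same controller form \refeqn{M}, same $e_I$), so that the rotational error satisfies $J\dot e_\Omega = -k_R e_R - k_\Omega e_\Omega - k_I e_I + \Delta_R + (\text{cross terms that vanish with }e_\Omega)$; the shift $e_I \mapsto e_I - \Delta_R/k_I$ absorbs the disturbance exactly, as in Proposition~\ref{prop:Att}.

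Next I would choose the Lyapunov candidate
\begin{align*}
\mathcal V = \tfrac12 k_x\|e_x\|^2 + \tfrac12 m\|e_v\|^2 + c_1 m\, e_x\cdot e_v + \tfrac{k_i}{2}\|\sat_\sigma(e_i) - \bar e_i\|^2_{(\cdot)} + k_R\Psi(R,R_c) + \tfrac12 e_\Omega\cdot Je_\Omega + c_2 e_R\cdot Je_\Omega + \tfrac{k_I}{2}\,\|e_I - \tfrac{\Delta_R}{k_I}\|^2,
\end{align*}
where the integral-error terms are handled as in the proof of Proposition~\ref{prop:Att} (and its translational analogue) so that $\mathcal V$ is positive-definite on the sublevel set determined by \refeqn{Psi0}--\refeqn{ex0}. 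Under $\Psi(R(0),R_c(0))<\psi_1<1$, Proposition~\ref{prop:1}(v) gives $\Psi \le \frac{1}{2-\psi_1}\|e_R\|^2$ and $\|e_R\|$ stays bounded, which is exactly what makes $\alpha=\sqrt{\psi_1(2-\psi_1)}<1$ the right contraction constant for the mismatch term $X$. Differentiating, the translational part yields a bound of the form $\dot{\mathcal V}_{\mathrm{trans}} \le -z_x^T W_1 z_x + z_x^T W_{12} z_R$ with $z_x=(\|e_x\|,\|e_v\|)$, $z_R=(\|e_R\|,\|e_\Omega\|)$, and $W_1,W_{12}$ as defined in \refeqn{W1}--\refeqn{W12}; the condition \refeqn{c1b} is precisely $W_1\succ 0$ (the first bound) together with $c_1 m < k_v$-type sign conditions (the $\sqrt{k_x/m}$ bound ensures the cross term $c_1 m e_x\cdot e_v$ does not destroy positive-definiteness of $\mathcal V$). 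The rotational part, by the computation behind Proposition~\ref{prop:Att}, yields $\dot{\mathcal V}_{\mathrm{rot}} \le -z_R^T W_2 z_R$ with $W_2$ as in \refeqn{W2}, where \refeqn{c2} with the bound $B_2\ge\|(2J-\trs{J}I)\|\|\Omega_d\|$ (here $\Omega_c$ plays the role of $\Omega_d$, with $\|\Omega_c\|$ uniformly bounded because the commanded acceleration is bounded by $B_1$ via \refeqn{B}) guarantees $W_2\succ 0$.

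Combining, $\dot{\mathcal V} \le -\begin{bmatrix}z_x^T & z_R^T\end{bmatrix}\begin{bmatrix}W_1 & -\tfrac12 W_{12}\\ -\tfrac12 W_{12}^T & W_2\end{bmatrix}\begin{bmatrix}z_x\\ z_R\end{bmatrix}$, and the Schur-complement condition \refeqn{kRkWb}, $\lambda_m(W_2) > \|W_{12}\|^2/(4\lambda_m(W_1))$, makes the full $4\times4$ matrix positive-definite. Hence $\dot{\mathcal V} \le -c\,\mathcal V$ for some $c>0$ on the invariant sublevel set, which gives exponential convergence of $(e_x,e_v,e_R,e_\Omega)$ to zero and uniform boundedness of $e_i,e_I$; the saturation bound \refeqn{kisigma}, $k_i\sigma>\delta_x$, is what ensures the integral term can actually cancel the steady-state disturbance $\Delta_x$ so that the equilibrium value of $\sat_\sigma(e_i)$ lies in the interior of the saturation range, keeping $\sat_\sigma$ locally linear near the equilibrium (needed for the exponential rate, not just asymptotic convergence). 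I expect the main obstacle to be the careful bookkeeping of the attitude-mismatch term $X$ in the translational dynamics: one must show that $\|fRe_3 - (\text{ideal thrust})\|$ is bounded by the right linear combination of $\|e_x\|,\|e_v\|,\|e_R\|$ with the coefficient $\alpha$ (not a larger constant), using $\Psi<1$ together with the geometry of $b_{3_c}$ and the bound $e_3^T R^T R_c e_3 \ge 1-\Psi \ge 1-\psi_1$; getting the constants in $W_{12}$ exactly as stated — in particular the $\sqrt{3}k_i\sigma$ and the $k_x e_{x_{\max}}$ entries — is the delicate part, and it is where the hypotheses \refeqn{Psi0} and \refeqn{ex0} are consumed.
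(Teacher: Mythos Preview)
Your proposal is correct and follows essentially the same architecture as the paper: split the Lyapunov function into a translational block $\mathcal V_1$ with cross term $c_1 m\,e_x\cdot e_v$ and a rotational block $\mathcal V_2$ borrowed verbatim from the proof of Proposition~\ref{prop:Att}, bound the attitude-mismatch term $X$ by $(k_x\|e_x\|+k_v\|e_v\|+\sqrt{3}k_i\sigma+B_1)\|e_R\|$ using $\|(e_3^TR_c^TRe_3)Re_3-R_ce_3\|\le\|e_R\|\le\alpha$ in the domain $D_1$, and then combine to get $\dot{\mathcal V}\le -z_1^TW_1z_1+z_1^TW_{12}z_2-z_2^TW_2z_2$ with the Schur-complement condition \refeqn{kRkWb}.

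One point to tighten: the translational integral storage term cannot be the quadratic $\tfrac{k_i}{2}\|\sat_\sigma(e_i)-\bar e_i\|^2$ you wrote, because its time-derivative picks up a factor $\sat_\sigma'(e_i)$ that vanishes outside $[-\sigma,\sigma]^3$ and prevents the needed cancellation against $-k_i\sat_\sigma(e_i)+\Delta_x$ in $m\dot e_v$. The paper uses the primitive
\[
\int_{\Delta_x/k_i}^{e_i}\bigl(k_i\sat_\sigma(\mu)-\Delta_x\bigr)\cdot d\mu,
\]
whose derivative is exactly $(k_i\sat_\sigma(e_i)-\Delta_x)\cdot\dot e_i=(k_i\sat_\sigma(e_i)-\Delta_x)\cdot(e_v+c_1e_x)$ and which is positive-definite about $e_i=\Delta_x/k_i$ precisely when \refeqn{kisigma} holds. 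Relatedly, you do not get $\dot{\mathcal V}\le -c\,\mathcal V$ globally, since $\dot{\mathcal V}$ is only negative-definite in $(z_1,z_2)$, not in $(e_i,e_I)$; the correct conclusion (which you also state) is exponential decay of $(e_x,e_v,e_R,e_\Omega)$ together with uniform boundedness of $e_i,e_I$, i.e.\ partial exponential stability.
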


\begin{proof}
See Appendix \ref{sec:pfPos}.
\end{proof}


Proposition \ref{prop:Pos} requires that the initial attitude error is less than $90^\circ$ in \refeqn{Psi0}. Suppose that this is not satisfied, i.e. $1\leq\Psi(R(0),R_c(0))<2$. We can still apply Proposition \ref{prop:Att}, which states that the attitude error is asymptotically decreases to zero for almost all cases, and it satisfies \refeqn{Psi0} in a finite time. Therefore, by combining the results of Proposition \ref{prop:Att} and \ref{prop:Pos}, we can show attractiveness of the tracking errors when $\Psi(R(0),R_c(0))<2$.

\begin{prop}{(Position Controlled Flight Mode with a Larger Initial Attitude Error)}\label{prop:Pos2}
Suppose that the initial conditions satisfy
\begin{gather}
1\leq \Psi(R(0),R_c(0)) < 2\label{eqn:eRb3},\\
\|e_x(0)\| < e_{x_{\max}},
\end{gather}
for a constant $e_{x_{\max}}$. Consider the control inputs $f,M$ defined in \refeqn{f}-\refeqn{M}, where the control parameters satisfy \refeqn{kisigma}-\refeqn{kRkWb} for a positive constant $\psi_1<1$. Then the zero equilibrium of the tracking errors is attractive, i.e., $e_x,e_v,e_R,e_\Omega\rightarrow 0$ as $t\rightarrow\infty$. 
\end{prop}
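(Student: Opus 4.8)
The plan is to establish attractiveness by a two-phase argument that chains Proposition~\ref{prop:Att} with Proposition~\ref{prop:Pos}. For the first phase I would regard the computed attitude $(R_c,\Omega_c)$ from \refeqn{RdWc}--\refeqn{Rd3} as an exogenous attitude command and note that, since the feed-forward terms $(R^TR_c\Omega_c)^\wedge JR^TR_c\Omega_c+JR^TR_c\dot\Omega_c$ in \refeqn{M} cancel the corresponding $R_c,\Omega_c$-dependent drift in the $e_\Omega$-equation, the closed-loop attitude error dynamics for $(e_R,e_\Omega,e_I)$ are exactly those treated in Proposition~\ref{prop:Att} with $R_c$ playing the role of $R_d$. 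The hypothesis \refeqn{eRb3} gives $\Psi(R(0),R_c(0))<2$, so Proposition~\ref{prop:Att} yields $(e_R,e_\Omega)\to 0$ for every initial condition outside the measure-zero stable manifold of the undesired critical set $\{R_c\exp(\pi\hat s):s\in\Sph^2\}$; in particular $\Psi(R(t),R_c(t))\to 0$, hence there is a finite time $T>0$ with $\Psi(R(T),R_c(T))<\psi_1<1$. In the second phase I would reinitialize the analysis at $t=T$: if the state at time $T$ meets the standing hypotheses \refeqn{Psi0}--\refeqn{ex0} of Proposition~\ref{prop:Pos}, that proposition delivers exponential convergence of $e_x,e_v,e_R,e_\Omega$ to zero on $[T,\infty)$, which is the asserted attractiveness.

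Two points must be verified to legitimize the first phase. First, the solution must exist on all of $[0,T]$ with no finite escape; this is routine, since $R\in\SO$ is bounded, $\|e_R\|\le 1$, the thrust obeys $|f|\le\|X\|\le k_x\|e_x\|+k_v\|e_v\|+\sqrt{3}\,k_i\sigma+B_1$ with $X$ the bracketed vector of \refeqn{f}, and \refeqn{A1} is assumed throughout, so $\|e_x\|+\|e_v\|$ satisfies a linear differential inequality and $\Omega,e_I,e_i$ grow at most accordingly. Second, applying Proposition~\ref{prop:Att} requires a bound $B_2$ as in \refeqn{B_2} on $\|\Omega_c\|$ (and implicitly on $\dot\Omega_c$, which enters \refeqn{M}); since $\Omega_c=(R_c^T\dot R_c)^\vee$ is assembled from $\dot b_{3_c}$, hence from $e_v,\dot e_v$, the derivatives of $x_d$, and the magnitude $\|X\|$ kept away from zero by \refeqn{A1}, this again reduces to boundedness of $e_x,e_v$ on $[0,T]$; the constants $c_2,B_2$ are then fixed to satisfy \refeqn{c2} with this $B_2$.

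The step I expect to be the main obstacle is carrying hypothesis \refeqn{ex0} over to time $T$. Throughout the first phase $\Psi(R,R_c)$ may be close to $2$, so the favorable estimate $\|X-(X\cdot Re_3)Re_3\|\le\alpha\|X\|$ on the thrust-misalignment force --- the estimate behind the positive definiteness of $W_1$ in \refeqn{W1} --- is unavailable, and the differential inequality above shows only that $\|e_x(T)\|$ is finite, not that it lies below $e_{x_{\max}}$. I would handle this by propagating that inequality explicitly to time $T$ to obtain an initial-condition-dependent bound on $\|e_x(T)\|$, and then reading the constant $e_{x_{\max}}$ in the statement as ``sufficiently large'': the gain conditions \refeqn{W12}--\refeqn{kRkWb} only demand that $\lambda_m(W_2)$, i.e.\ the attitude gains $k_R,k_\Omega$, be chosen large relative to $e_{x_{\max}}$, so once a large enough $e_{x_{\max}}$ is fixed the remaining constants can still be selected to satisfy \refeqn{kisigma}--\refeqn{kRkWb}. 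Coordinating these choices consistently --- the first-phase growth rate depends on $k_x,k_v,k_i\sigma$ while $T$ depends on the attitude gains --- is the delicate bookkeeping of the argument; once \refeqn{Psi0}--\refeqn{ex0} hold at $t=T$, Proposition~\ref{prop:Pos} finishes the proof.
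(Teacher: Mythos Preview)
Your two-phase strategy---invoke Proposition~\ref{prop:Att} to drive $\Psi(R,R_c)$ below $\psi_1$ in some finite time $t^*$, then hand off to Proposition~\ref{prop:Pos}---is exactly the route the paper takes; the paper's own proof is a short sketch that names the same two ingredients and identifies the same residual task of showing $z_1=[\|e_x\|,\|e_v\|]^T$ remains bounded on $[0,t^*]$, deferring the details of that boundedness argument to \cite{LeeLeo_4457,LeeLeoAJC12}. Your discussion of finite-escape, the $\Omega_c$/$B_2$ circularity, and the carryover of $e_{x_{\max}}$ goes beyond what the paper writes out explicitly but is consistent with what the cited references supply.
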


\begin{proof}
See Appendix \ref{sec:pfPos2}.
\end{proof}

Linear or nonlinear PID controllers have been widely used for a quadrotor UAV. But, they have been applied in an \textit{ad-hoc} manner without stability analysis. This paper provides a new form of nonlinear PID controller on $\SE$ that guarantees almost global attractiveness in the presence of uncertainties. Compared with nonlinear robust control system~\cite{LeeLeoPACC12}, this paper yields stronger asymptotic stability without concern for chattering.



\subsection{Direction of the First Body-Fixed Axis}\label{sec:b1c}

As described above, the construction of the orthogonal matrix $R_c$ involves having its third column $b_{3_c}$ specified by \refeqn{Rd3}, and its first column $b_{1_c}$ is arbitrarily chosen to be orthogonal to the third column, which corresponds to a one-dimensional degree of choice. 



By choosing $b_{1_c}$ properly, we constrain the asymptotic direction of the first body-fixed axis. 
Here, we propose to specify the \textit{projection} of the first body-fixed axis onto the plane normal to $b_{3_c}$. In particular, we choose a desired direction $b_{1_d}\in\Sph^2$, that is not parallel to $b_{3_c}$, and $b_{1_c}$ is selected as $b_{1_c}=\mathrm{Proj}[b_{1_d}]$, where $\mathrm{Proj}[\cdot]$ denotes the normalized projection onto the plane perpendicular to $b_{3_c}$. In this case, the first body-fixed axis does not converge to $b_{1_d}$, but it converges to the projection of $b_{1_d}$, i.e. $b_1\rightarrow b_{1_c}=\mathrm{Proj}[b_{1_d}]$ as $t\rightarrow\infty$. This can be used to specify the heading direction of a quadrotor UAV in the horizontal plane~\cite{LeeLeo}.
\section{NUMERICAL EXAMPLE}\label{sec:NE}

The parameters of a quadrotor UAV are chosen as $J=[0.43,0.43,1.02]\times 10^{-2}\,\mathrm{kgm^2}$, $m=0.755\,\mathrm{kg}$, $d=0.169\,\mathrm{m}$, $c_{\tau f}=0.0132\,\mathrm{m}$. Disturbances for the translational dynamics and the rotational dynamics are chosen as
\begin{align*}
\Delta_x= [-0.5,
    0.2,
    1]^T\,\mathrm{N},\quad
\Delta_R = [0.2,-0.1,-0.02]^T\,\mathrm{Nm}.
\end{align*}
Controller parameters are selected as follows: $k_x=12.8$, $k_v=4.22$, $k_i=1.28$, $k_R=0.65$, $k_\Omega = 0.11$, $k_I=0.06$, $c_1=3.6$, $c_2=0.8$, $\sigma=1$.

Initially, the quadrotor UAV is at a hovering condition: $x(0)=v(0)=\Omega(0)=0_{3\times 1}$, and $R(0)=I_{3\times 3}$. The desired trajectory is a flipping maneuver where the quadrotor rotates about its second body-fixed axis by $360^\circ$, while changing the heading angle by $90^\circ$ about the vertical $e_3$ axis. This is a complex maneuver combining a nontrivial pitching maneuver with a yawing motion. It is achieved by concatenating the following two control modes:
\begin{itemize}
\item[(i)] Attitude tracking to rotate the quadrotor ($t\leq 0.4$)
\begin{align*}
R_d(t)= \exp(\pi t\hat e_3)\exp(4\pi t\hat e_2).
\end{align*}
\item[(ii)] Trajectory tracking to make it hover after completing the preceding rotation ($0.4<t \leq 4$)
\begin{align*}
x_d(t)=[0,0,0]^T,\quad b_{1_d} = [0,1,0]^T.
\end{align*}
\end{itemize}

Figure \ref{fig:NoInt} illustrate simulation results without the integral control terms proposed in this paper. There are steady state errors in attitude tracking and position tracking at Figures \ref{fig:NoIntPsi} and \ref{fig:NoIntx}. The proposed integral control terms eliminate the steady state error while exhibiting good tracking performances as shown at Figure \ref{fig:Int}. The resulting controlled maneuver of the  quadrotor UAV is illustrated at Figure \ref{fig:Int3D}. 

In the prior results of generating nontrivial maneuvers of a quadrotor UAV,  complicated reachability analyses are required to guarantee safe transitions between multiple control systems~\cite{GilHofIJRR11}. In the proposed geometric nonlinear control system, there are only two controlled flight modes for position tracking and attitude tracking, and each controller has large region of attraction. Therefore, complex maneuvers can be easily generated in a unified way without need for time-consuming planning efforts, as illustrated by this numerical example. This is another unique contribution of this paper.

\begin{figure}
\centerline{
	\subfigure[Attitude error function $\Psi$]{
		\includegraphics[width=0.48\columnwidth]{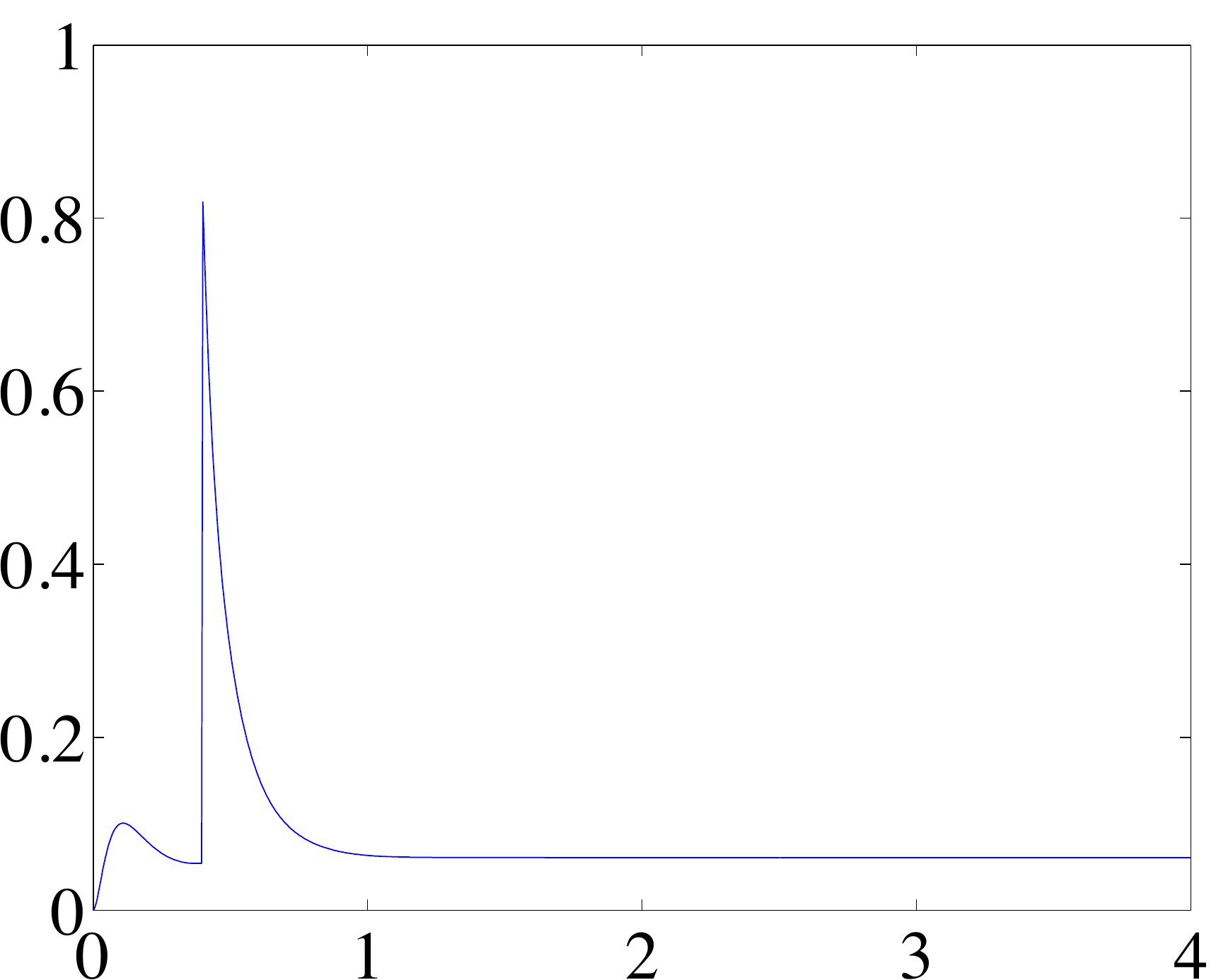}\label{fig:NoIntPsi}}
		\hfill
	\subfigure[Position $x,x_d$ ($\mathrm{m}$)]{
		\includegraphics[width=0.485\columnwidth]{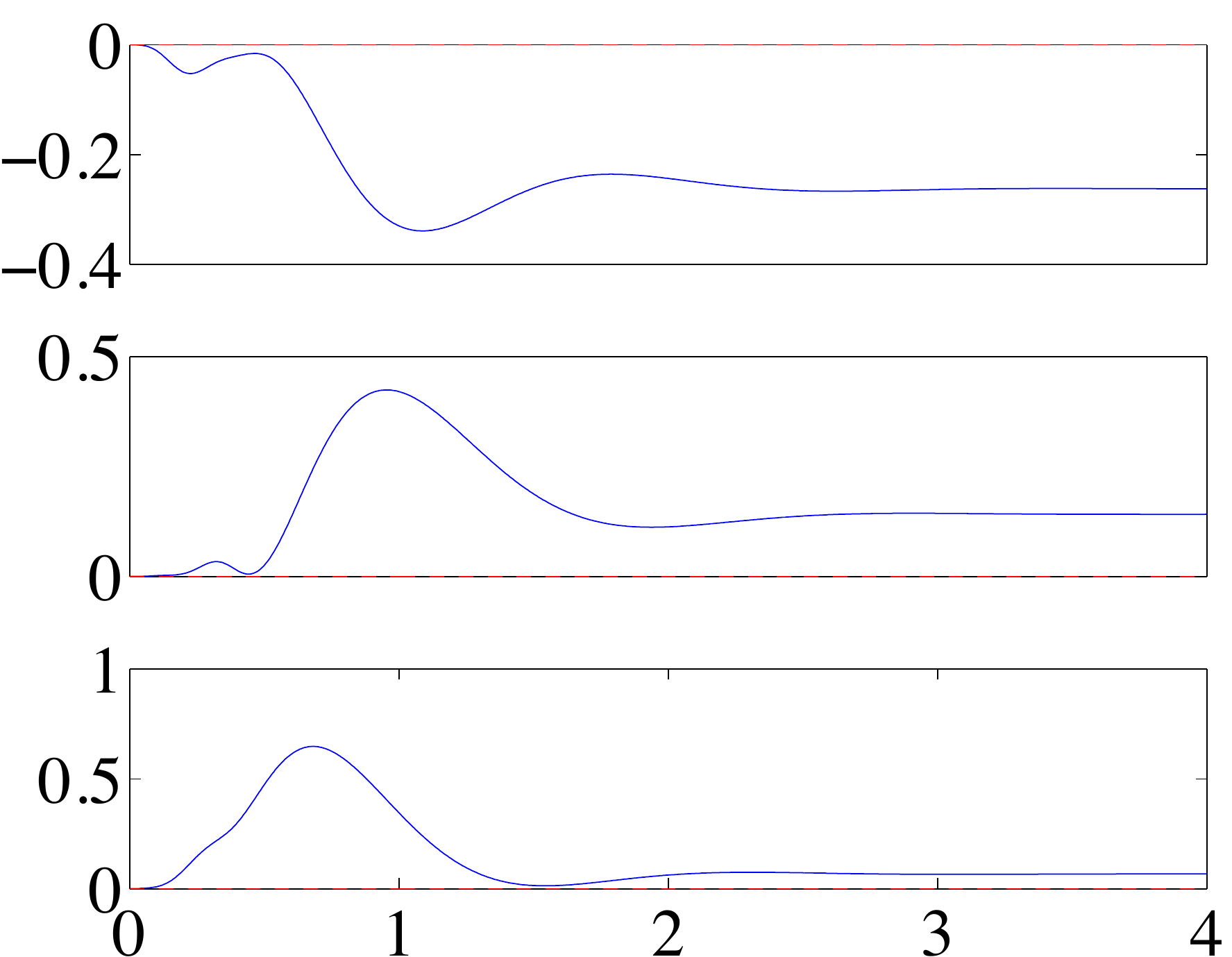}\label{fig:NoIntx}}
}
\centerline{
	\subfigure[Angular velocity $\Omega,\Omega_d$ ($\mathrm{/sec}$)]{
		\includegraphics[width=0.48\columnwidth]{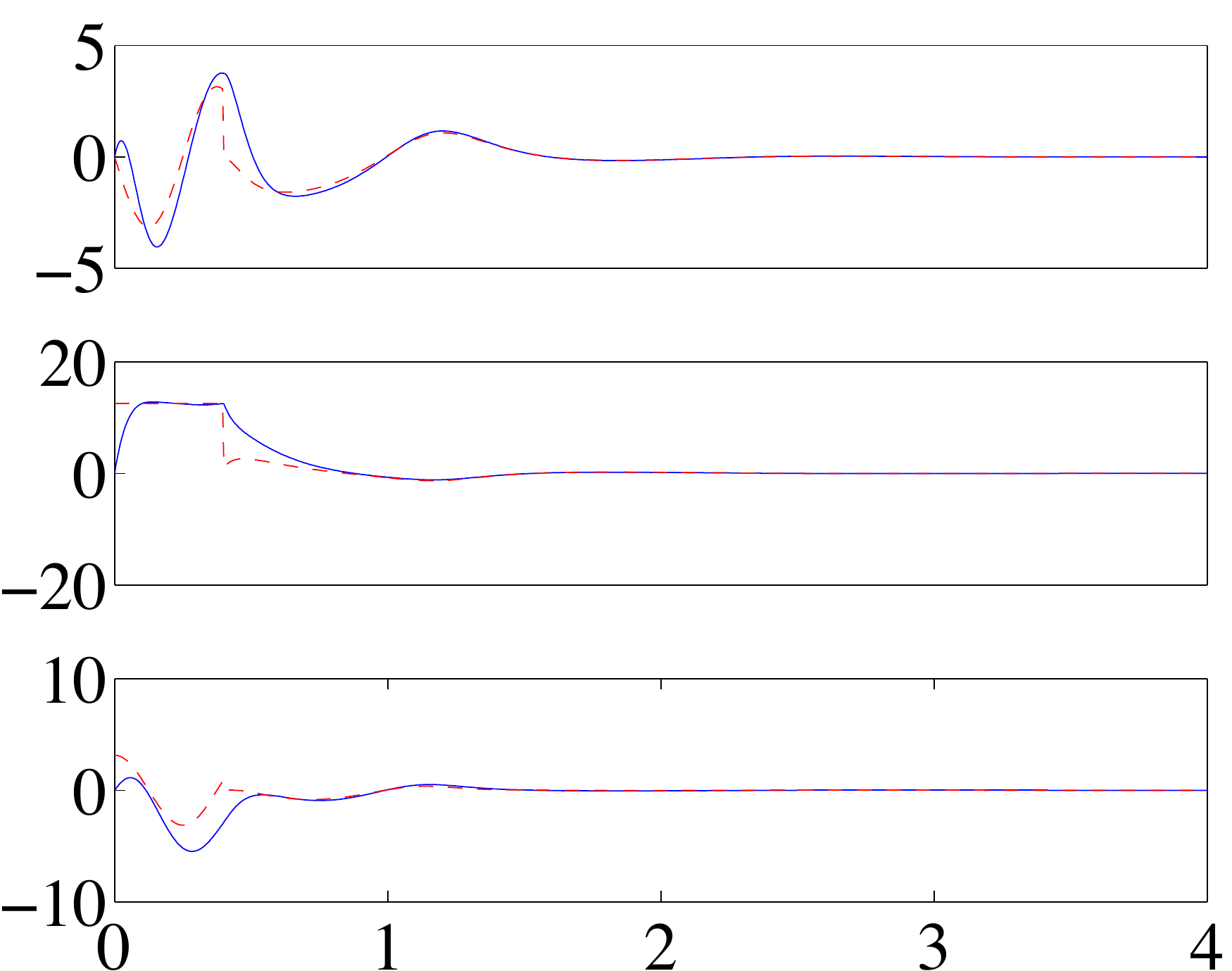}\label{fig:NoIntW}}
		\hfill
	\subfigure[Thrust of each rotor ($\mathrm{N}$)]{
		\includegraphics[width=0.50\columnwidth]{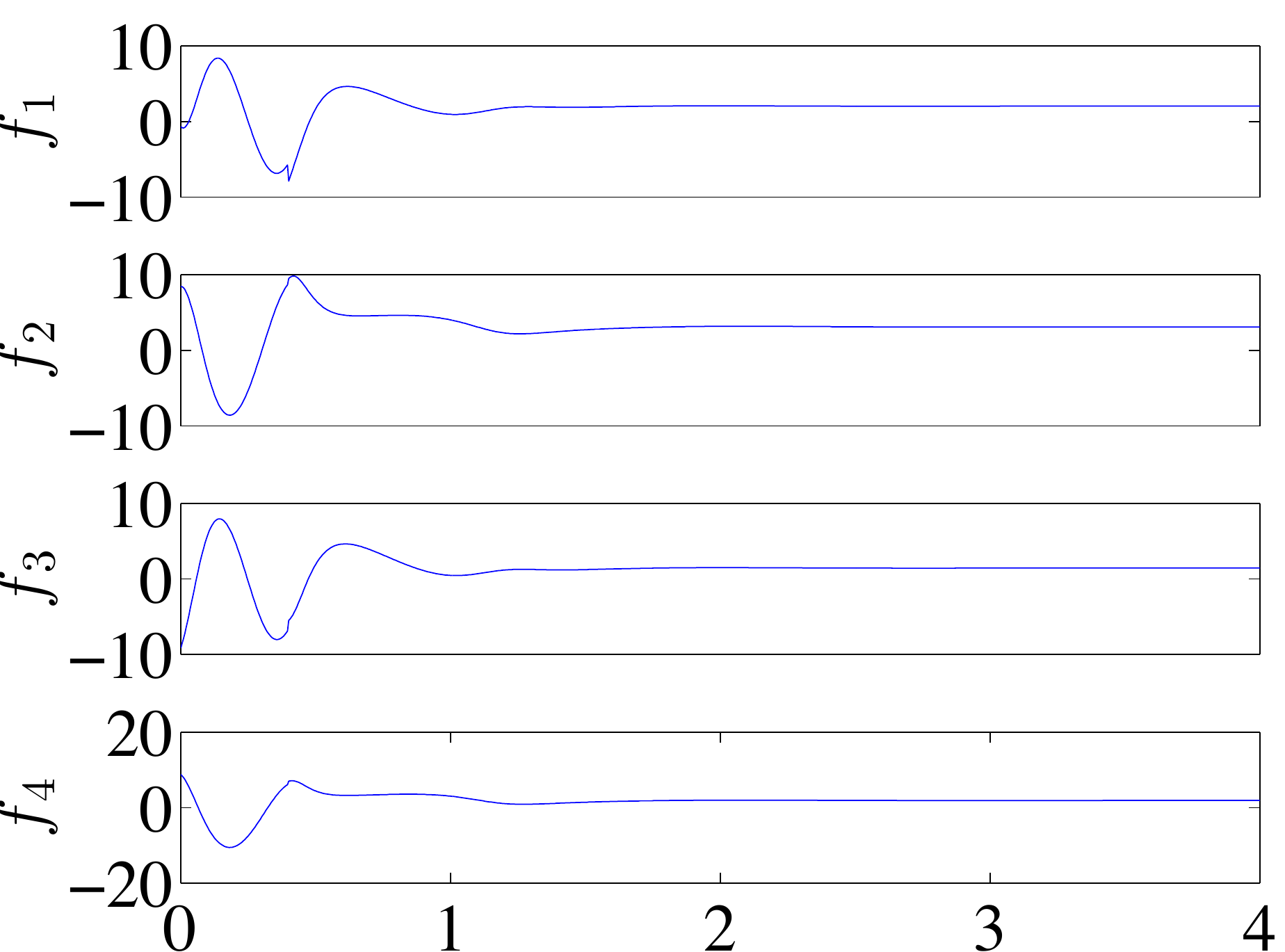}}
}
\caption{Flipping without integral terms (red,dotted:desired, blue,solid:actual)}\label{fig:NoInt}
\end{figure}

\begin{figure}
\centerline{
	\subfigure[Attitude error function $\Psi$]{
		\includegraphics[width=0.48\columnwidth]{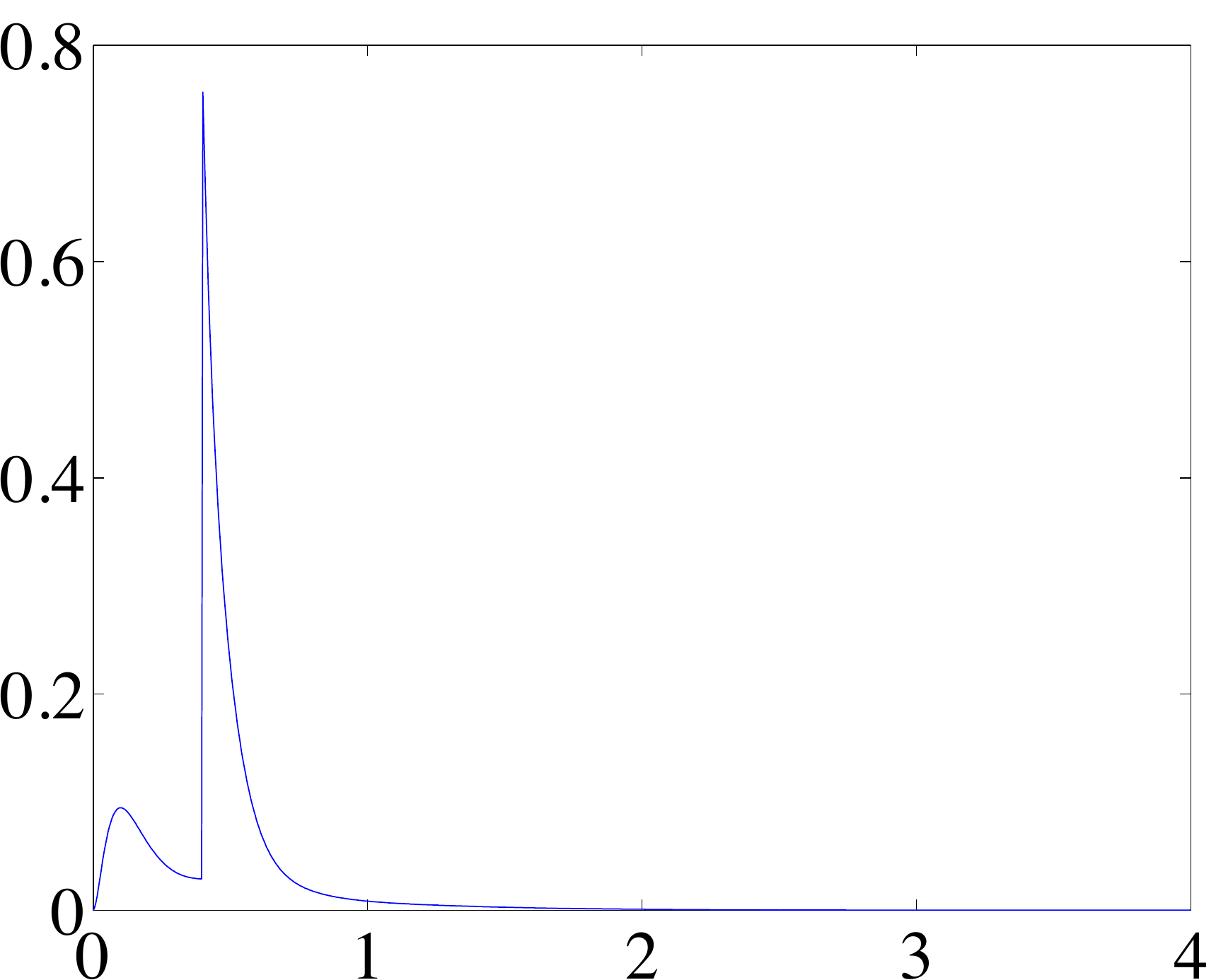}\label{fig:IntPsi}}
		\hfill
	\subfigure[Position $x,x_d$ ($\mathrm{m}$)]{
		\includegraphics[width=0.485\columnwidth]{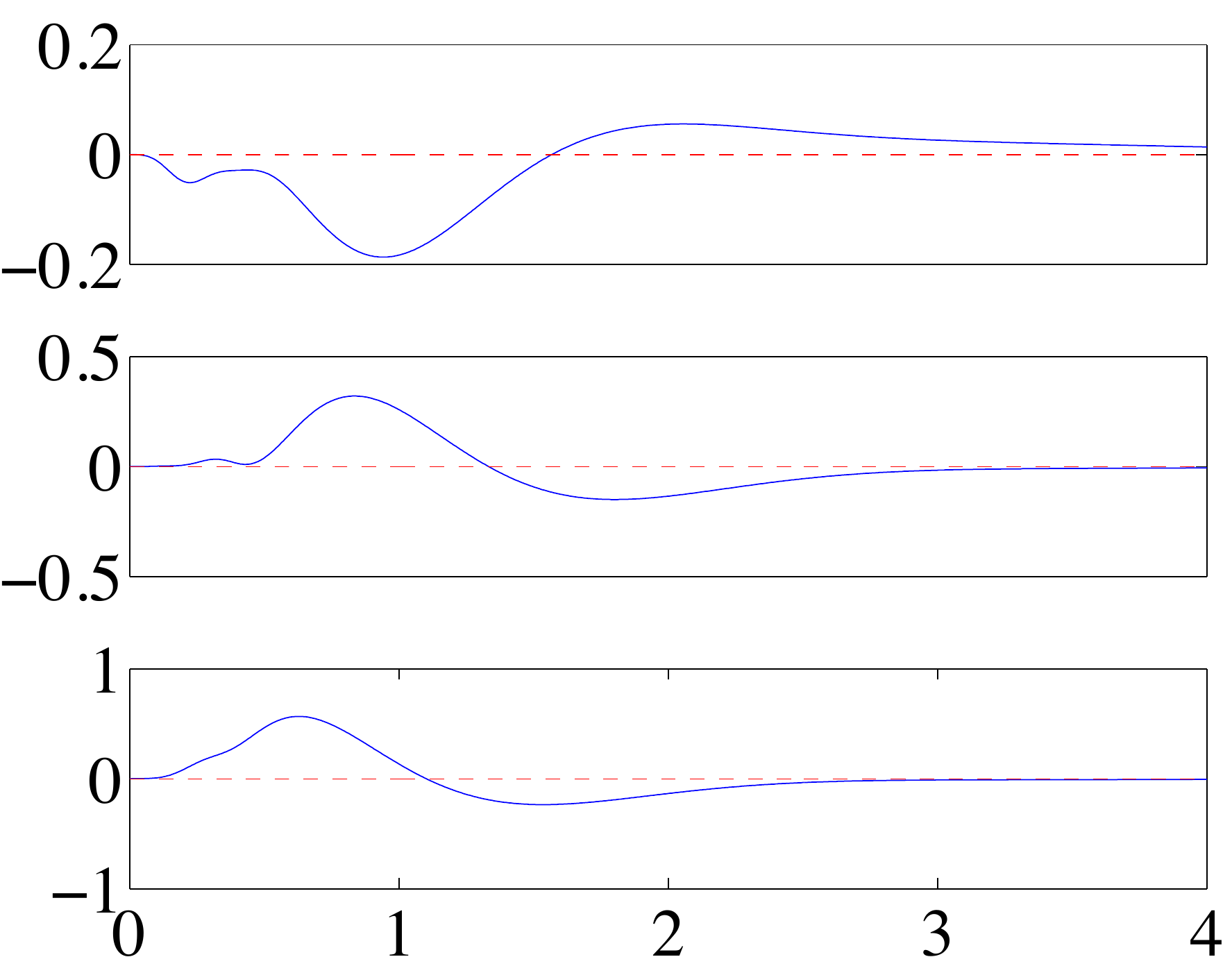}\label{fig:Intx}}
}
\centerline{
	\subfigure[Angular velocity $\Omega,\Omega_d$ ($\mathrm{/sec}$)]{
		\includegraphics[width=0.48\columnwidth]{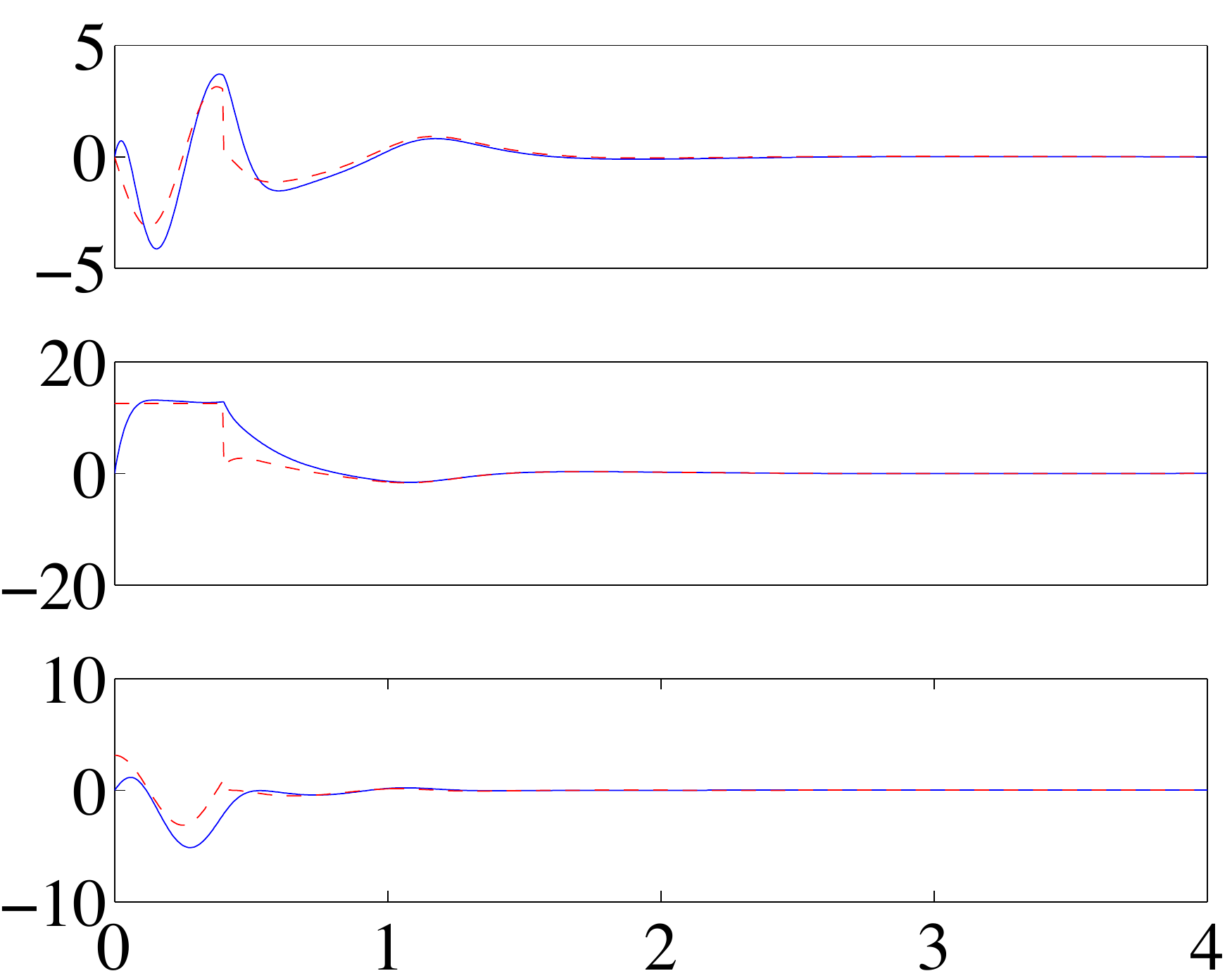}\label{fig:IntW}}
		\hfill
	\subfigure[Thrust of each rotor ($\mathrm{N}$)]{
		\includegraphics[width=0.50\columnwidth]{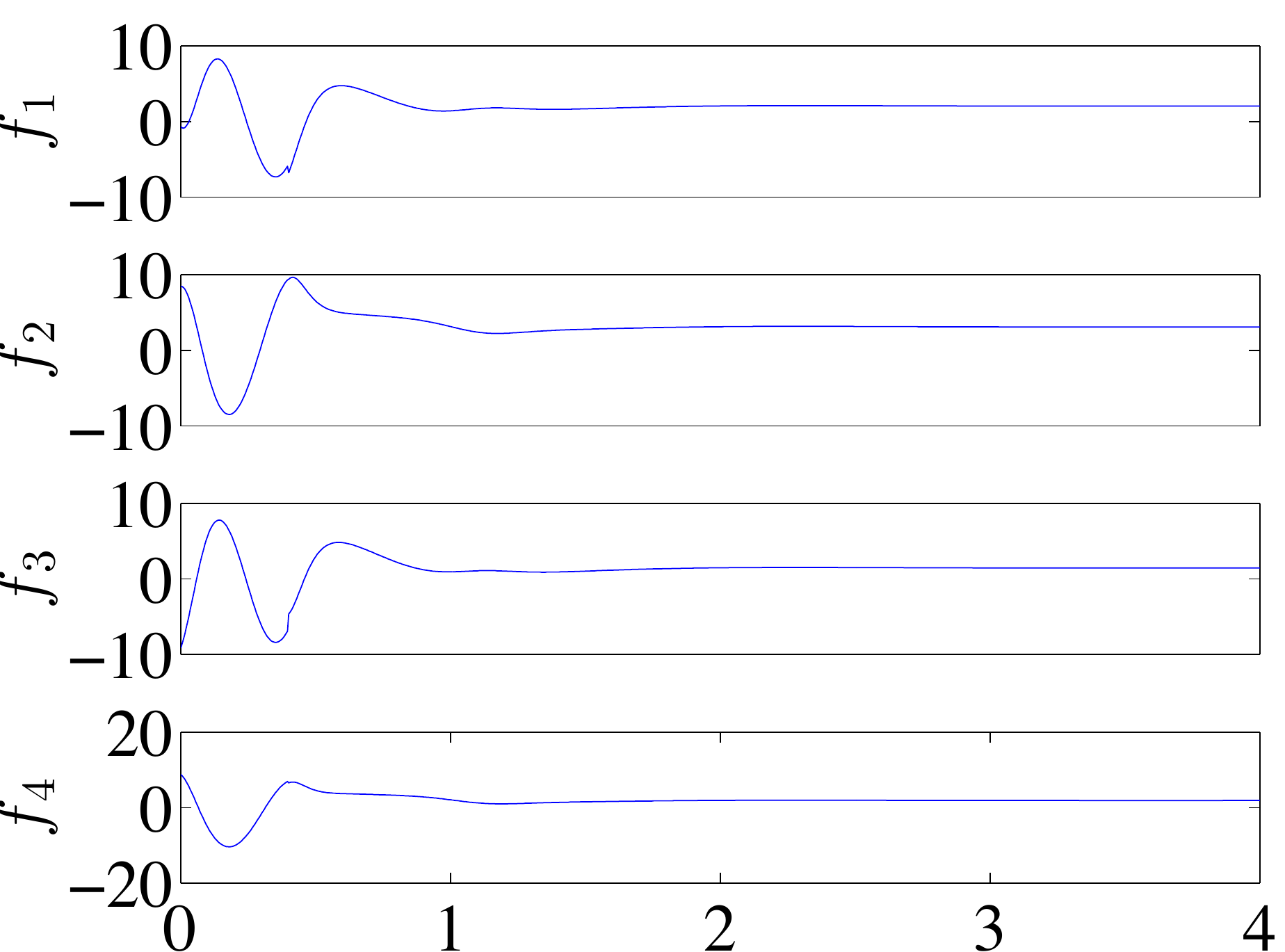}}
}
\caption{Flipping with integral terms (red,dotted:desired, blue,solid:actual)}
\label{fig:Int}
\end{figure}


\begin{figure}
\centerline{
\setlength{\unitlength}{0.1\columnwidth}\scriptsize
\begin{picture}(9.9,4.5)(0,0)
	\put(0,0){\includegraphics[width=0.99\columnwidth]{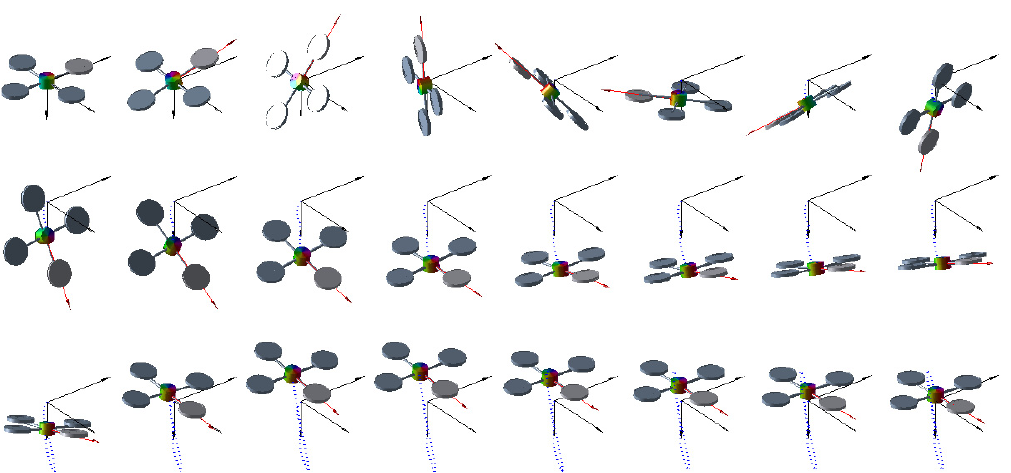}}
	\put(0.98,4.2){$\vec e_1$}
	\put(0.98,3.3){$\vec e_2$}
	\put(0.25,3.25){$\vec e_3$}
\end{picture}}
\caption{Snapshots of a flipping maneuver with integral terms: the red axis denotes the direction of the first body-fixed axis. The quadrotor UAV rotates about the horizontal $e_2$ axis by $360^\circ$, while rotating its first body-fixed axis about the vertical $e_3$ axis by $90^\circ$. The trajectory of its mass center is denoted by blue, dotted lines.}\label{fig:Int3D}
\end{figure}

\section{Preliminary Experimental Results}

Preliminary experimental results are provided for the attitude tracking control of a hardware system illustrated at Figure \ref{fig:Quad}. To test the attitude dynamics, it is attached to a spherical joint. As the center of rotation is below the center of gravity, there exists a destabilizing gravitational moment, and the resulting attitude dynamics is similar to an inverted rigid body pendulum. The control input at \refeqn{aM} is augmented with an additional term to eliminate the effects of the gravity.

The desired attitude command is described by using 3-2-1 Euler angles, i.e. $R_d(t)=R_d(\phi(t),\theta(t),\psi(t))$, where
$\phi(t) =  \frac{\pi}{9}\sin(\pi t)$, $\theta(t)=\frac{\pi}{9}\cos(\pi t)$, $\psi(t)=0$. This represents a combined rolling and pitching motion with a period of $2$ seconds. The results of the experiment are illustrated at Figure \ref{fig:QuadResult}. This shows good tracking performances of the proposed control system in an experimental setting. Experiments for the position tracking is currenlty ongoing.

\begin{figure}
\centerline{
	\subfigure[Hardware configuration]{
\setlength{\unitlength}{0.1\columnwidth}\scriptsize
\begin{picture}(7,4)(0,0)
\put(0,0){\includegraphics[width=0.7\columnwidth]{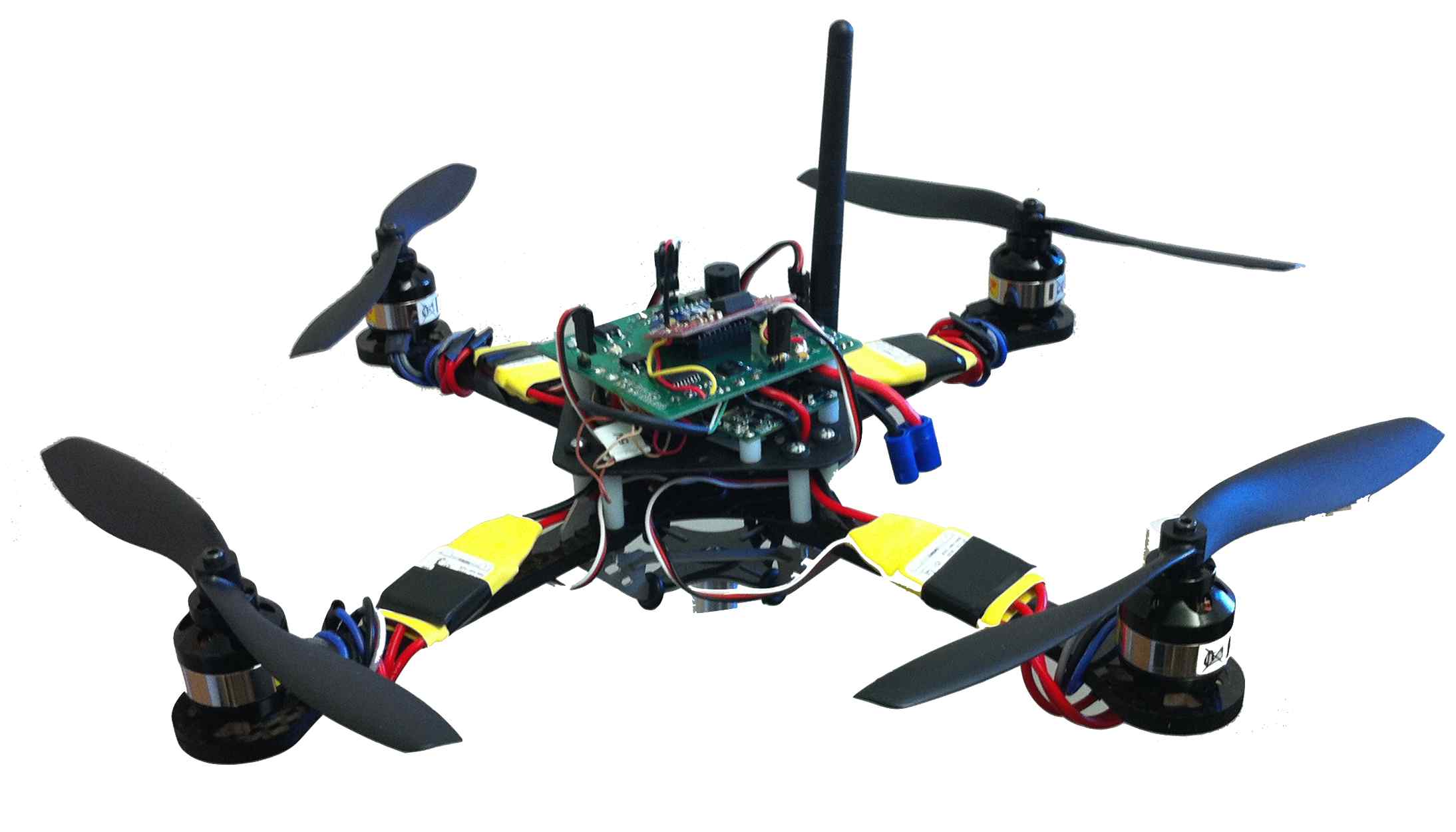}}
\put(1.95,3.2){\shortstack[c]{OMAP 600MHz\\Processor}}
\put(2.3,0){\shortstack[c]{Attitude sensor\\3DM-GX3\\ via UART}}
\put(0.85,1.4){\shortstack[c]{BLDC Motor\\ via I2C}}
\put(0.1,2.5){\shortstack[c]{Safety Switch\\XBee RF}}
\put(4.3,3.2){\shortstack[c]{WIFI to\\Ground Station}}
\put(5,2.0){\shortstack[c]{LiPo Battery\\11.1V, 2200mAh}}
\end{picture}}
	\subfigure[Attitude control testbed]{
	\includegraphics[width=0.27\columnwidth]{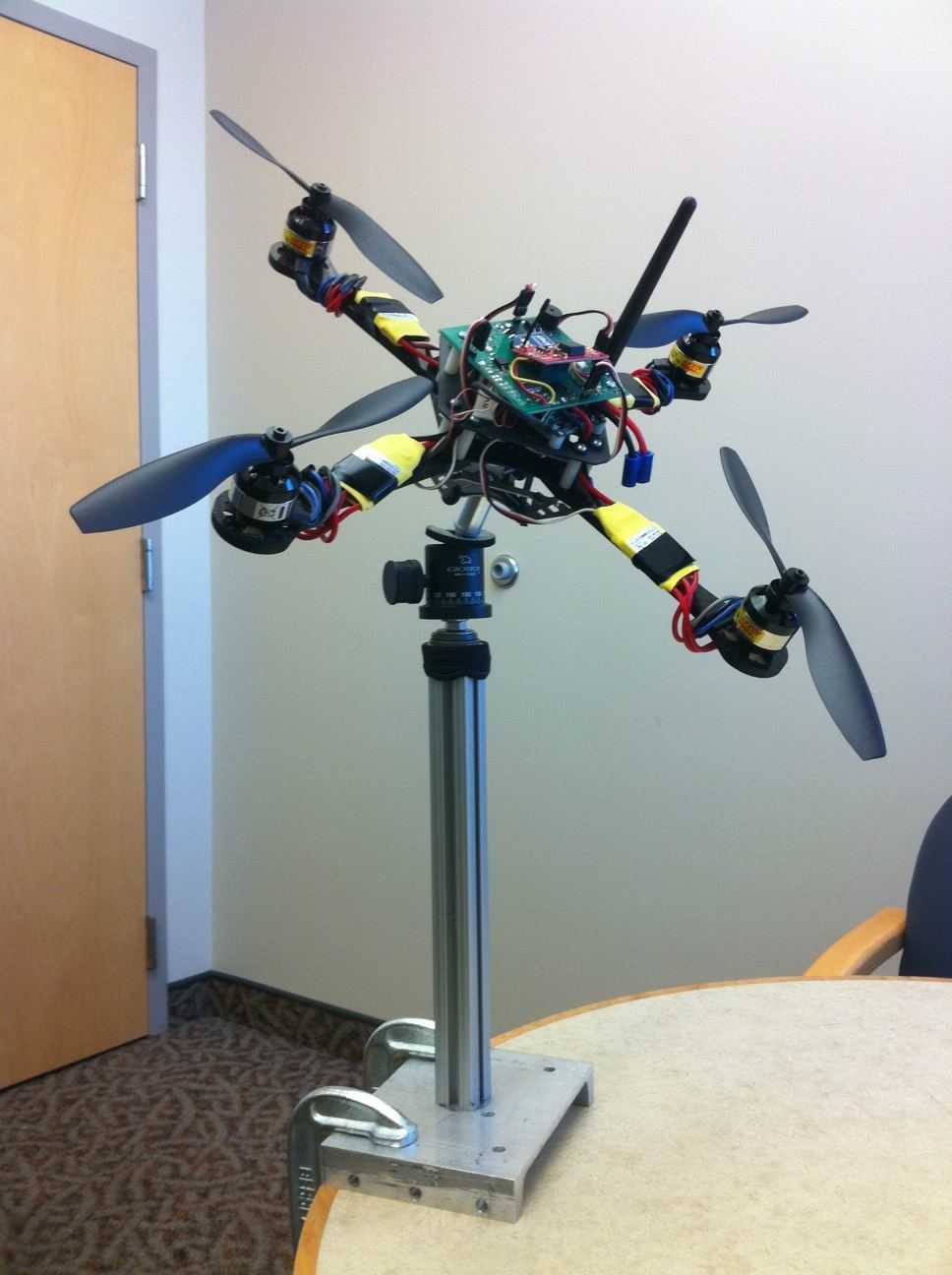}}
}
\caption{Hardware development for a quadrotor UAV}\label{fig:Quad}
\end{figure}

\begin{figure}
\centerline{
	\subfigure[Attitude error function $\Psi$]{
		\includegraphics[width=0.48\columnwidth]{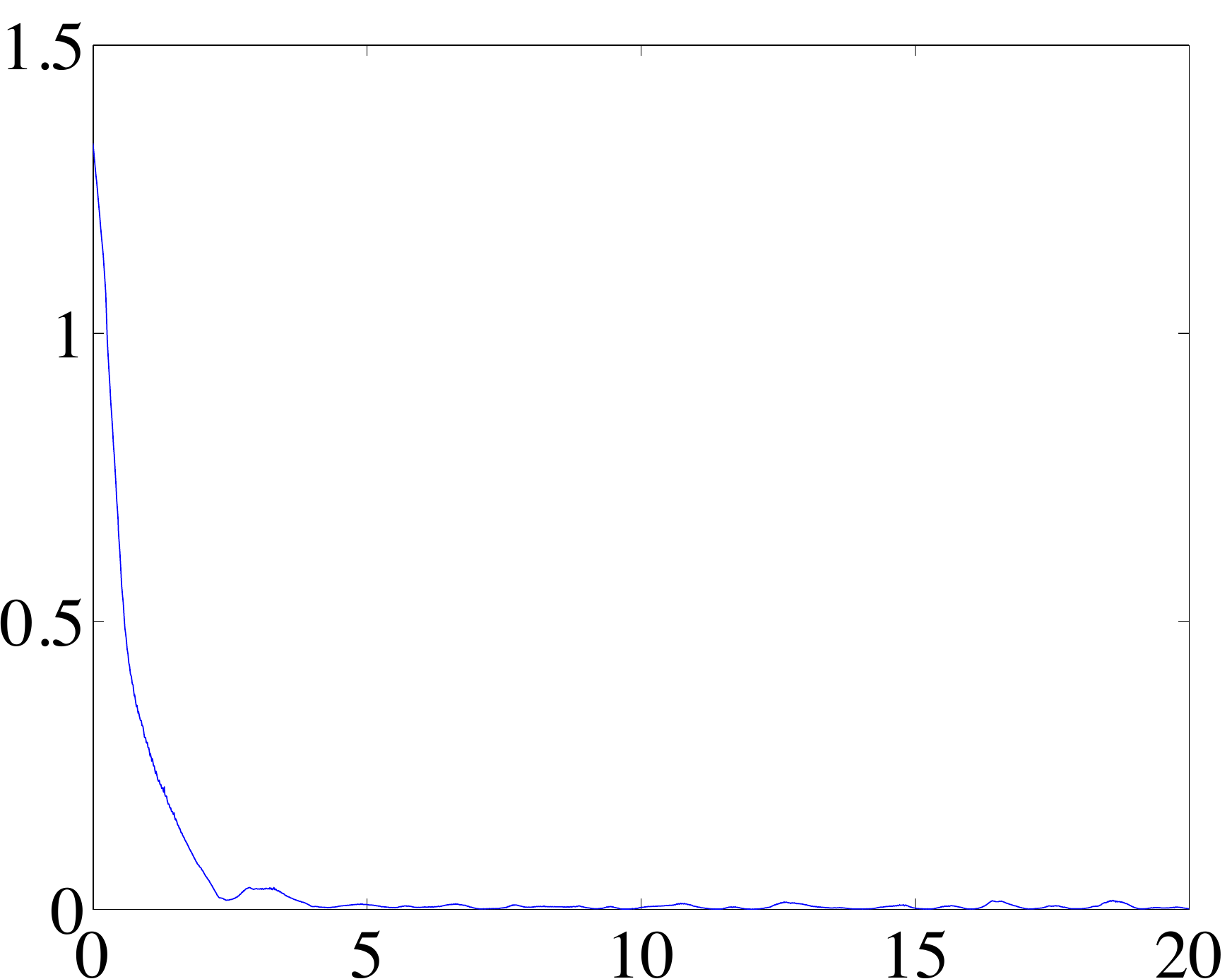}\label{fig:QuadPsi}}
		\hspace*{0.01\textwidth}
	\subfigure[Attitude $R,R_d$]{
		\includegraphics[width=0.47\columnwidth]{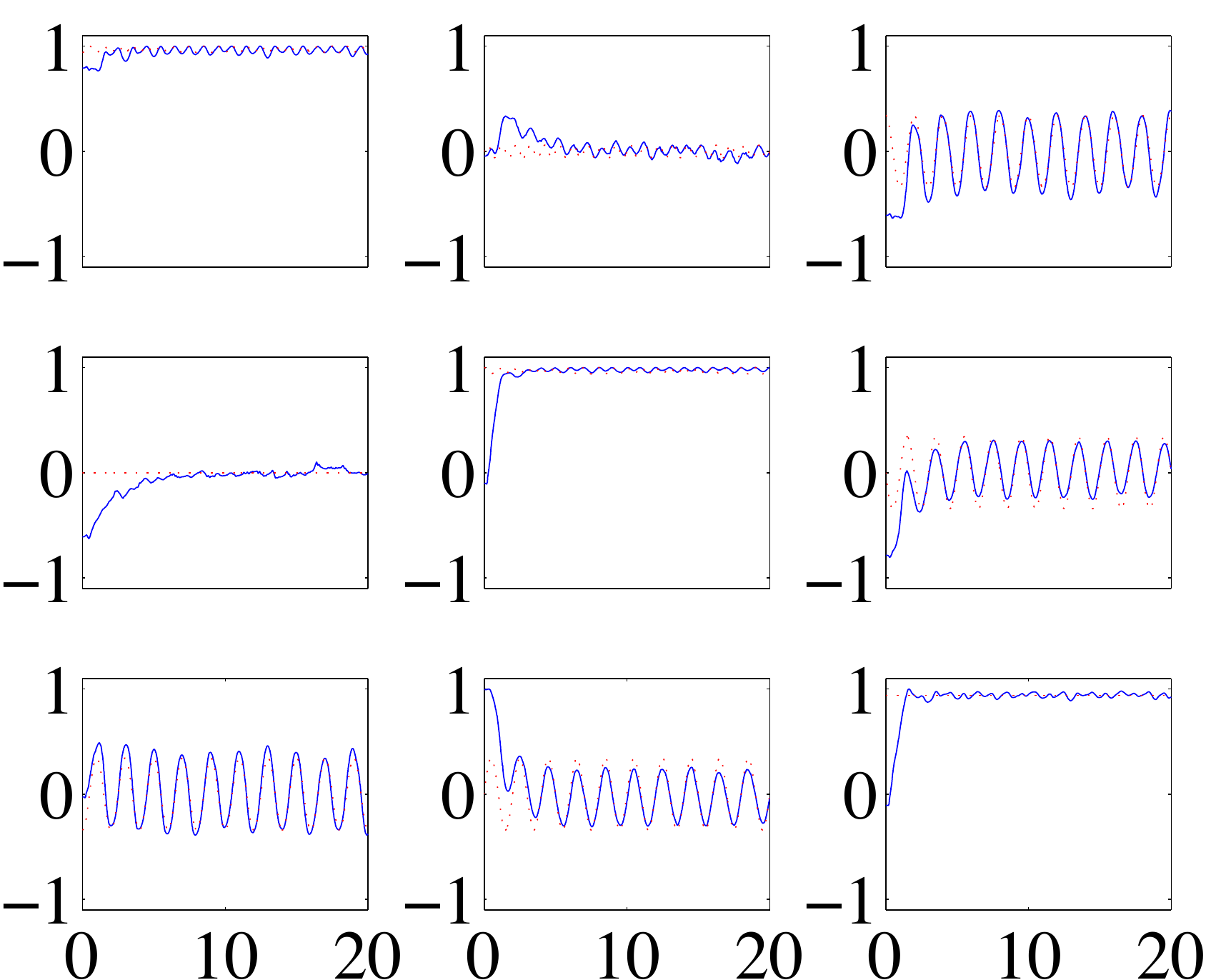}\label{fig:Quadx}}
}
\centerline{
	\subfigure[Angular velocity $\Omega,\Omega_d$ ($\mathrm{/sec}$)]{
		\includegraphics[width=0.48\columnwidth]{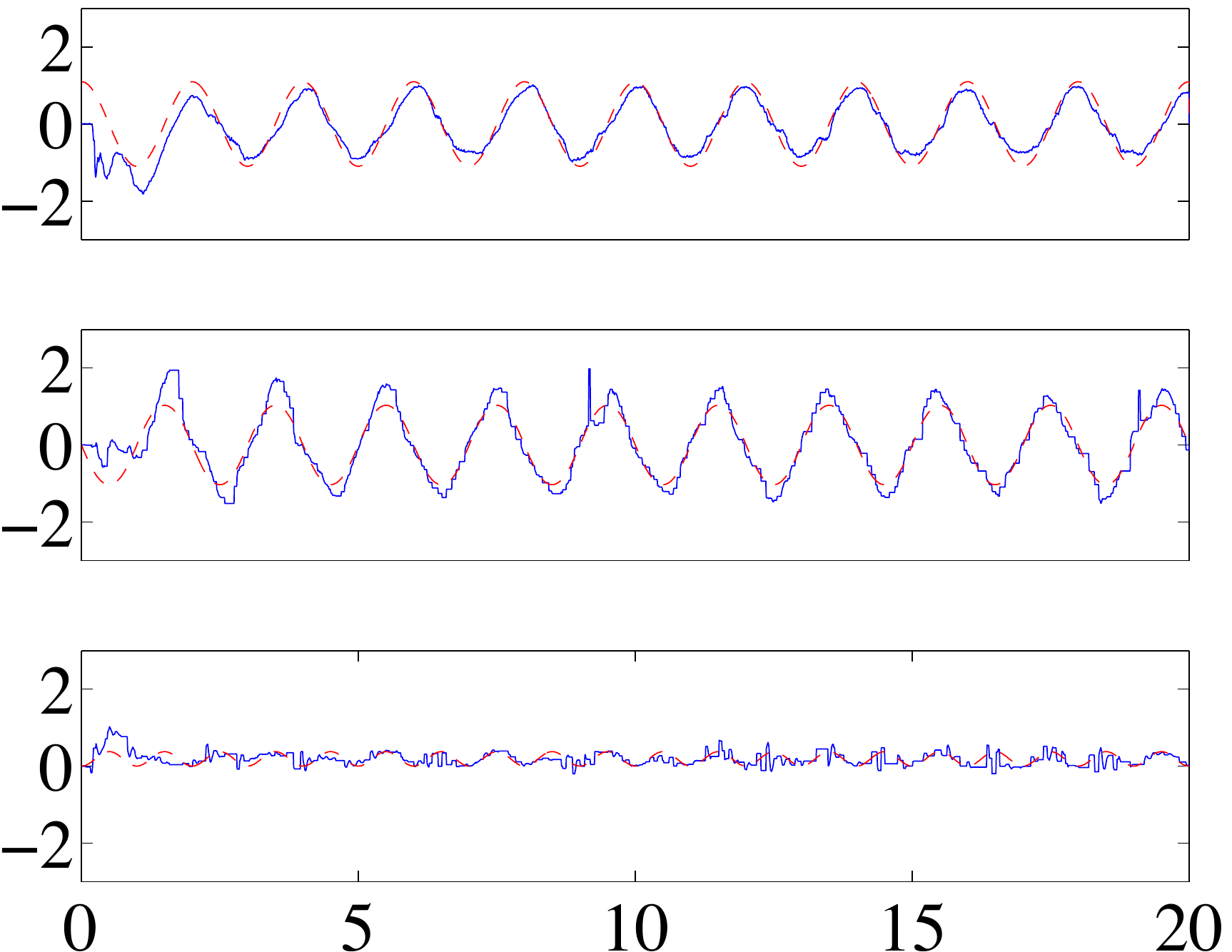}\label{fig:QuadW}}
		\hfill
	\subfigure[Thrust of each rotor ($\mathrm{N}$)]{
		\includegraphics[width=0.50\columnwidth]{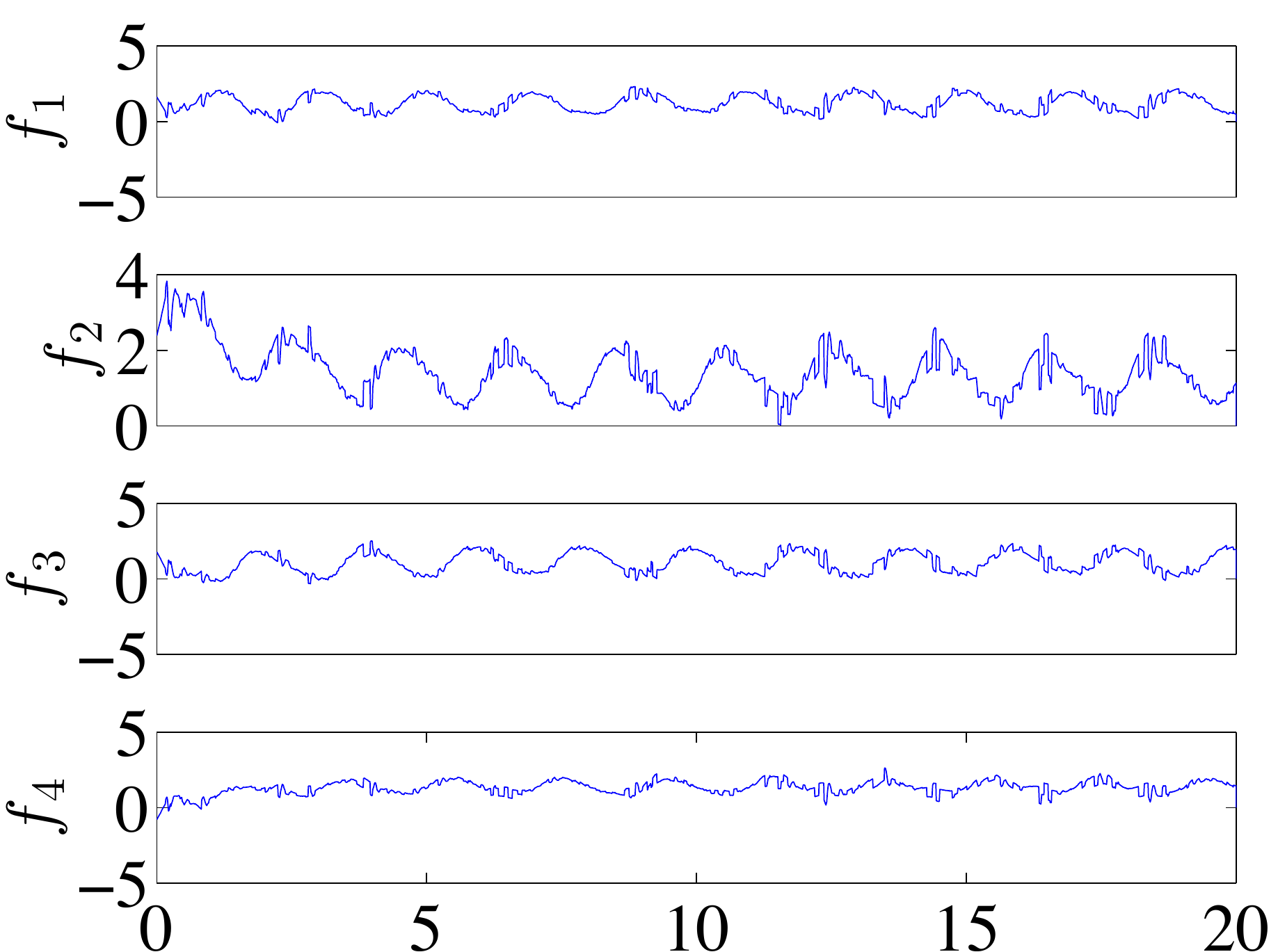}}
}
\caption{Attitude tracking experiment (red,dotted:desired, blue,solid:actual)}\label{fig:QuadResult}
\end{figure}


\appendix

\section{Properties and Proofs}



\subsection{Proof of Proposition \ref{prop:Att}}\label{sec:pfAtt}

We first find the error dynamics for $e_R,e_\Omega$, and define a Lyapunov function. Then, we find conditions on control parameters to guarantee the boundedness of tracking errors.

Using \refeqn{EL3}, \refeqn{EL4}, \refeqn{M}, the time-derivative of $Je_\Omega$ can be written as
\begin{align}
J\dot e_\Omega & = \{Je_\Omega + d\}^\wedge e_\Omega - k_R e_R-k_\Omega e_\Omega- k_I e_I + \Delta_R,\label{eqn:JeWdot}
\end{align}
where $d=(2J-\trs{J}I)R^TR_d\Omega_d\in\Re^3$. The important property is that the first term of the right hand side is normal to $e_\Omega$, and it simplifies the subsequent Lyapunov analysis.

Define a Lyapunov function $\mathcal{V}_2$ be 
\begin{align}
\mathcal{V}_2 & = \frac{1}{2} e_\Omega \cdot J e_\Omega + k_R\, \Psi(R,R_d)+c_2 e_R\cdot Je_\Omega\nonumber\\
&\quad +\frac{1}{2}k_I \|e_I-\frac{\Delta_R}{k_I}\|^2.\label{eqn:V2}
\end{align}

From \refeqn{PsiLB}, \refeqn{PsiUB}, the Lyapunov function $\mathcal{V}_2$ is bounded as
\begin{align}
z_2^T M_{21} z_2
&+\frac{k_I}{2} \|e_I-\frac{\Delta_R}{k_I}\|^2 
\leq \mathcal{V}_2 \nonumber\\
&\leq z_2^T M_{22} z_2+\frac{k_I}{2} \|e_I-\frac{\Delta_R}{k_I}\|^2,
\label{eqn:V2b}
\end{align}
where $z_2 =[\|e_R\|,\;\|e_\Omega\|]^T\in\Re^2$, and the matrices $M_{12},M_{22}$ are given by
\begin{align}
M_{21} = \frac{1}{2}\begin{bmatrix} k_R & -c_2\lambda_M \\ -c_2\lambda_M & \lambda_m  \end{bmatrix},\,
M_{22} = \frac{1}{2}\begin{bmatrix} \frac{2k_R}{2-\psi_2} & c_2\lambda_M \\ c_2\lambda_M & \lambda_{M}\end{bmatrix}.
\end{align}
From \refeqn{PsiUB}, the upper-bound of \refeqn{V2b} is satisfied in the following domain:
\begin{align}
D_2 = \{ (R,\Omega)\in \SO\times\Re^3\,|\, \Psi(R,R_d)<\psi_2<2\}.\label{eqn:D2}
\end{align}

From \refeqn{Psidot00}, \refeqn{JeWdot}, the time derivative of $\mathcal{V}_2$ along the solution of the controlled system is given by
\begin{align*}
\dot{\mathcal{V}}_2 & =
-k_\Omega\|e_\Omega\|^2 - e_\Omega\cdot(k_Ie_I-\Delta_R) + c_2 \dot e_R \cdot Je_\Omega\nonumber\\
& \quad + c_2 e_R \cdot J\dot e_\Omega + (k_Ie_I- \Delta_R) \dot e_I.
\end{align*}
From \refeqn{eI}, we have $\dot e_I = c_2 e_R + e_\Omega$. Substituting this and \refeqn{JeWdot}, the above equation becomes
\begin{align*}
\dot{\mathcal{V}}_2 & =
-k_\Omega\|e_\Omega\|^2  + c_2 \dot e_R \cdot Je_\Omega\nonumber\\
& \quad -c _2 k_R \|e_R\|^2 + c_2 e_R \cdot ((Je_\Omega+d)^\wedge e_\Omega -k_\Omega e_\Omega).
\end{align*}
Since $\|e_R\|\leq 1$, $\|\dot e_R\|\leq \|e_\Omega\|$, and $\|d\|\leq B_2$, we have
\begin{align}
\dot{\mathcal{V}}_2 \leq - z_2^T W_2 z_2,\label{eqn:dotV2}
\end{align}
where the matrix $W_2\in\Re^{2\times 2}$ is given by
\begin{align*}
W_2 = \begin{bmatrix} c_2k_R & -\frac{c_2}{2}(k_\Omega+B_2) \\ 
-\frac{c_2}{2}(k_\Omega+B_2) & k_\Omega-2c_2\lambda_M \end{bmatrix}.
\end{align*}
The condition on $c_2$ given at \refeqn{c2} guarantees that all of matrices $M_{21},W_2$ are positive definite. This implies that the zero equilibrium of tracking errors $(e_R,e_\Omega,e_I)=(0,0,\frac{\Delta_R}{k_I})$ is stable in the sense of Lyapunov, and $e_R,e_\Omega\rightarrow 0$ as $t\rightarrow\infty$. But, this does not necessarily implies $R\rightarrow R_d$ as $e_R=0$ at any critical point of $\Psi$ described at the property (iii) of Proposition \ref{prop:1}. 

Instead, we show instability of undesired equilibrium. Define $\mathcal{W}_2=2k_R-\mathcal{V}_2$. Then $\mathcal{W}_2=0$ at the undesired equilibria as $\Phi=2$ at those points. Since
\begin{align*}
\mathcal{W}_2 &\geq -\frac{\lambda_M}{2}\|e_\Omega\|^2 +k_R(2-\Psi) -c_2\|e_R\|\|e_\Omega\| \\
&\quad -\frac{k_I}{2}\|e_I-\frac{\Delta_R}{k_I}\|^2.
\end{align*}
Due to the continuity of $\Psi$, at any arbitrary small neighborhood of the undesired equilibrium attitude, we can choose $R$ such that $2-\Psi>0$. Therefore, if $\|e_\Omega\|$ and $\|e_I-\frac{\delta_R}{k_I}\|$ are sufficiently small, then $\mathcal{W}_2>0$ at such attitudes. In short, at any arbitrary small neighborhood of the undesired equilibrium, there exists a domain where $\mathcal{W}_2>0$, and $\dot{\mathcal{W}}_2=-\dot{\mathcal{V}}_2>0$ in that domain from \refeqn{dotV2}. According to Theorem 4.3 at~\cite{Kha02}, the undesired equilibrium is unstable.

The region of attraction to the desired equilibrium excludes the stable manifolds to the undesired equilibria. But the dimension of the union of the stable manifolds to the unstable equilibria is less than the tangent bundle of $\SO$. Therefore, the measure of the stable manifolds to the unstable equilibrium is zero. Then, the desired equilibrium is referred to as almost globally asymptotically stable with respect to $e_R$ and $e_\Omega$.

If this Lyapunov analysis is restricted to the domain $D_2$, then the upper-bound of \refeqn{V2b} is satisfied, and the condition on $c_2$ guarantees that the matrix $M_{22}$ is positive definite. These yield local exponential stability with respect to $e_R$ and $e_\Omega$.

\subsection{Proof of Proposition \ref{prop:Pos}}\label{sec:pfPos}
\setcounter{paragraph}{0}

We first derive the tracking error dynamics and a Lyapunov function for the translational dynamics of a quadrotor UAV, and later it is combined with the stability analyses of the rotational dynamics in Appendix \ref{sec:pfAtt}.

The subsequent analyses are developed in the domain $D_1$
\begin{align}
D_1=\{&(e_x,e_v,R,e_\Omega)\in\Re^3\times\Re^3\times \SO\times\Re^3\,|\,\nonumber\\
& \|e_x\|< e_{x_{\max}},\;\Psi< \psi_1 < 1\},\label{eqn:D}
\end{align}
Similar to \refeqn{PsiUB}, we can show that 
\begin{align}
\frac{1}{2} \norm{e_R}^2 \leq  \Psi(R,R_c) \leq \frac{1}{2-\psi_1} \norm{e_R}^2\label{eqn:eRPsi1}.
\end{align}

\paragraph{Translational Error Dynamics} The time derivative of the position error is $\dot e_x=e_v$. The time-derivative of the velocity error is given by
\begin{align}
m\dot e_v = m\ddot x -m\ddot x_d = mg e_3 - fRe_3 -m\ddot x_d+\Delta_x. \label{eqn:evdot0}
\end{align}
Consider the quantity $e_3^T R_c^T R e_3$, which represents the cosine of the angle between $b_3=Re_3$ and $b_{3_c}=R_ce_3$. Since $1-\Psi(R,R_c)$ represents the cosine of the eigen-axis rotation angle between $R_c$ and $R$, we have $e_3^T R_c^T R e_3\geq  1-\Psi(R,R_c)>0$ in $D_1$. Therefore, the quantity $\frac{1}{e_3^T R_c^T R e_3}$ is well-defined. To rewrite the error dynamics of $e_v$ in terms of the attitude error $e_R$, we add and subtract $\frac{f}{e_3^T R_c^T R e_3}R_c e_3$ to the right hand side of \refeqn{evdot0} to obtain
\begin{align}
m\dot e_v &  = mg e_3 -m\ddot x_d- \frac{f}{e_3^T R_c^T R e_3}R_c e_3 - X+\Delta_x,\label{eqn:evdot1}
\end{align}
where $X\in\Re^3$ is defined by
\begin{align}
X=\frac{f}{e_3^T R_c^T R e_3}( (e_3^T R_c^T R e_3)R e_3 -R_ce_3).\label{eqn:X}
\end{align}
Let $A=-k_x e_x - k_v e_v -k_i\sat_\sigma(e_i)-mg e_3 + m\ddot x_d$.
Then, from \refeqn{Rd3}, \refeqn{f}, we have ${b}_{3_c}=R_c e_3 = -A/\norm{A}$ and $f=-A\cdot Re_3$. By combining these, we obtain $f= (\norm{A}R_c e_3)\cdot R e_3$. Therefore, the third term of the right hand side of \refeqn{evdot1} can be written as
\begin{align*}
-  \frac{f}{e_3^T R_c^T R e_3} & R_c e_3 = -\frac{(\norm{A}R_c e_3)\cdot R e_3}{e_3^T R_c^T R e_3}\cdot - \frac{A}{\norm{A}}=A\\
& =-k_x e_x - k_v e_v -k_i\sat_\sigma(e_i) -mg e_3 + m\ddot x_d.
\end{align*}
Substituting this into \refeqn{evdot1}, the error dynamics of $e_v$ can be written as
\begin{align}
m\dot e_v & =  -k_x e_x - k_v e_v -k_i\sat_\sigma(e_i) - X+\Delta_x.\label{eqn:evdot}
\end{align}

\paragraph{Lyapunov Candidate for Translation Dynamics}
Let a Lyapunov candidate $\mathcal{V}_1$ be
\begin{align}
\mathcal{V}_1 & = \frac{1}{2}k_x\|e_x\|^2  + \frac{1}{2} m \|e_v\|^2 + c_1 e_x\cdot me_v\nonumber\\
&\quad
+ \int_{\frac{\Delta_x}{k_i}}^{e_i} (k_i\sat_\sigma (\mu)-\Delta_x)\cdot d\mu
\label{eqn:V1}.
\end{align}
The condition given at \refeqn{kisigma} implies that the last integral term of the above equation is positive definite about $e_i=\frac{\Delta_x}{k_i}$. The derivative of ${\mathcal{V}}_1$ along the solution of \refeqn{evdot} is given by
\begin{align}
\dot{\mathcal{V}}_1 
& =  -(k_v-mc_1) \|e_v\|^2 
- c_1 k_x \|e_x\|^2 
-c_1 k_v e_x\cdot e_v\nonumber\\
&\quad+X\cdot \braces{ c_1 e_x + e_v}.\label{eqn:V1dot0}
\end{align}

The last term of the above equation corresponds to the effects of the attitude tracking error on the translational dynamics. We find a bound of $X$, defined at \refeqn{X}, to show stability of the coupled translational dynamics and rotational dynamics in the subsequent Lyapunov analysis. Since $f=\|A\| (e_3^T R_c^T R e_3)$, we have
\begin{align*}
\norm{X} & \leq \|A\|\,\| (e_3^T R_c^T R e_3)R e_3 -R_ce_3\|\\
& \leq( k_x \|e_x\| + k_v \|e_v\| + \sqrt{3}k_i\sigma +B_1)\\
&\quad \times \| (e_3^T R_c^T R e_3)R e_3 -R_ce_3\|.
\end{align*}
The last term $\| (e_3^T R_c^T R e_3)R e_3 -R_ce_3\|$ represents the sine of the angle between $b_3=Re_3$ and $b_{c_3}=R_c e_3$, since $(b_{3_c}\cdot b_3)b_3 - b_{3_c} = b_{3}\times (b_3\times b_{3_c})$. The magnitude of the attitude error vector, $\|e_R\|$ represents the sine of the eigen-axis rotation angle between $R_c$ and $R$ (see \cite{LeeLeo}). Therefore, $\| (e_3^T R_c^T R e_3)R e_3 -R_ce_3\| \leq \| e_R\|$ in $D_1$. It follows that 
\begin{align}
\| (e_3^T R_d^T R e_3)R e_3 & -R_de_3\| \leq \| e_R\| = \sqrt{\Psi(2-\Psi)}\nonumber\\
& \leq \braces{\sqrt{\psi_1 (2-\psi_1)}\triangleq\alpha}  <1.\label{eqn:eR_bound}
\end{align}
Therefore, $X$ is bounded by
\begin{align}
\norm{X} 
&\leq ( k_x \|e_x\| + k_v \|e_v\| + \sqrt{3}k_i\sigma + B_1) \|e_R\| \nonumber\\
&\leq ( k_x \|e_x\| + k_v \|e_v\| + \sqrt{3}k_i\sigma + B_1) \alpha.\label{eqn:XB}
\end{align}
Substituting \refeqn{XB} into \refeqn{V1dot0}, 
\begin{align}
\dot{\mathcal{V}}_1 
& \leq   -(k_v(1-\alpha)-mc_1) \|e_v\|^2 
- {c_1 k_x}(1-\alpha) \|e_x\|^2 \nonumber\\
& + {c_1k_v}(1+\alpha) \|e_x\|\|e_v\|\nonumber\\
& +  \|e_R\| \braces{(\sqrt{3}k_i\sigma +B_1)({c_1} \|e_x\| + \|e_v\|)+k_x\|e_x\|\|e_v\|}.\label{eqn:V1dot1}
\end{align}
In the above expression for $\dot{\mathcal{V}}_1$, there is a third-order error term, namely $k_x\|e_R\|\|e_x\|\|e_v\|$. Using \refeqn{eR_bound}, it is possible to choose its upper bound as $k_x\alpha\|e_x\|\|e_v\|$ similar to other terms, but the corresponding stability analysis becomes complicated, and the initial attitude error should be reduced further. Instead, we restrict our analysis to the domain $D_1$ defined in \refeqn{D}, and its upper bound is chosen as $k_xe_{x_{\max}}\|e_R\|\|e_v\|$.

%
%

\paragraph{Lyapunov Candidate for the Complete System}
Let $\mathcal{V}=\mathcal{V}_1+\mathcal{V}_2$ be the Lyapunov candidate of the complete system. Define $z_1=[\|e_x\|,\;\|e_v\|]^T$, $z_2=[\|e_R\|,\;\|e_\Omega\|]^T\in\Re^2$, and 
\begin{align*}
\mathcal{V}_I = k_i\int_{\frac{\Delta_x}{k_i}}^{e_i} (\sat_\sigma(\mu)-\Delta_x)\cdot d\mu + \frac{k_I}{2}\|e_I-\frac{\Delta_R}{k_I}\|^2.
\end{align*}
Using \refeqn{eRPsi1}, the bound of the Lyapunov candidate $\mathcal{V}$ can be written as
\begin{align}
z_1^T M_{11} z_1 + z_2^T M_{21} z_2& + \mathcal{V}_I
 \leq \mathcal{V} \nonumber\\
&\leq z_1^T M_{12} z_1 + z_2^T M'_{22} z_2 +\mathcal{V}_I,\label{eqn:Vb}
\end{align}
where the matrices $M_{11},M_{12},M_{21},M_{22}$ are given by
\begin{gather*}
M_{11} = \frac{1}{2}\begin{bmatrix} k_x & -mc_1 \\ -mc_1 & m\end{bmatrix},\;
M_{12} = \frac{1}{2}\begin{bmatrix} k_x & mc_1 \\ mc_1 & m\end{bmatrix},\\
M_{21} = \frac{1}{2}\begin{bmatrix} k_R & -c_2\lambda_M \\ -c_2\lambda_M & \lambda_{m}  \end{bmatrix},\;
M'_{22} = \frac{1}{2}\begin{bmatrix} \frac{2k_R}{2-\psi_1} & c_2\lambda_M \\ c_2\lambda_M & \lambda_{M}\end{bmatrix}.
\end{gather*}

Using \refeqn{dotV2} and \refeqn{V1dot1}, the time-derivative of $\mathcal{V}$ is given by
\begin{align}
\dot{\mathcal{V}} & \leq -z_1^T W_1 z_1  + z_1^T W_{12} z_2 - z_2^T W_2 z_2\leq -z^T W z
\end{align}
where $z=[z_1,z_2]^T\in\Re^2$, and the matrices $W_1,W_{12},W_2\in\Re^{2\times 2}$ are defined at \refeqn{W1}-\refeqn{W2}. The matrix $W\in\Re^{2\times 2}$ is given by
\begin{align*}
W=\begin{bmatrix}
\lambda_{m}(W_1) & -\frac{1}{2}\|W_{12}\|_2\\
-\frac{1}{2}\|W_{12}\|_2 & \lambda_m(W_2)
\end{bmatrix}.
\end{align*}
The conditions given at \refeqn{c2}, \refeqn{c1b}, \refeqn{kRkWb} guarantee that all of matrices $M_{11},M_{21},M_{21},M_{22}',W$ are positive definite, and \refeqn{kisigma} implies that the integral term of $\mathcal{V}_1$ at \refeqn{V1} is positive definite about $e_i=\frac{\Delta_x}{k_i}$. This implies that the zero equilibrium of the tracking error is exponentially stable with respect to $e_x,e_v,e_R,e_\Omega$, and the integral terms $e_i,e_I$ are uniformly bounded.

\subsection{Proof of Proposition \ref{prop:Pos2}}\label{sec:pfPos2}

According to the proof of Proposition \ref{prop:Att}, the attitude tracking errors asymptotically decrease to zero, and therefore, they enter the region given by \refeqn{Psi0} in a finite time $t^*$, after which the results of Proposition \ref{prop:Pos} can be applied to yield attractiveness. The remaining part of the proof is showing that the tracking error $z_1=[\|e_x\|,\|e_v\|]^T$ is bounded in $t\in[0, t^*]$. This is similar to the proof given at~\cite{LeeLeo_4457,LeeLeoAJC12}.

\bibliography{ECC13}
\bibliographystyle{IEEEtran}

\end{document}